\documentclass[reqno,10pt]{amsart}
\setlength{\textheight}{20.5cm}

\numberwithin{equation}{section}
\usepackage{latexsym}
\usepackage{amsmath}
\usepackage{amssymb}
\usepackage{mathrsfs}
\usepackage{graphicx,colordvi}
\usepackage{upgreek}
\usepackage{ifthen}
\usepackage{color}

\usepackage[T1]{fontenc}
\usepackage[latin1]{inputenc}

\setcounter{tocdepth}{2}
\numberwithin{equation}{section}

\newtheorem{defi}{Definition}[section]
\newtheorem{theorem}[defi]{Theorem}
\newtheorem{lemma}[defi]{Lemma}
\newtheorem{corollary}[defi]{Corollary}
\newtheorem{proposition}[defi]{Proposition}
\newtheorem{remark}[defi]{Remark}
\newtheorem{remarks}[defi]{Remarks}

\usepackage{latexsym}
\usepackage{amsmath}
\usepackage{amssymb}

\newcommand{\cD}{{\mathcal D}}

\newcommand{\cE}{{\mathcal E}}

\newcommand{\cP}{{\mathcal P}}
\newcommand{\cT}{{\mathcal T}}

\newcommand{\EE}{{\mathbb E}}

\newcommand{\N}{{\mathbb N}}

\newcommand{\R}{{\mathbb R}}

\renewcommand{\epsilon}{\varepsilon}

\frenchspacing

\newcommand{\bb}{{\bb}}

\newcommand{\la}{\langle}
\newcommand{\ra}{\rangle}

\frenchspacing

\begin{document}

\title[$H^\infty$-calculus for the surface Stokes operator]{$H^\infty$-calculus for the surface Stokes operator and applications}


\author{Gieri Simonett}
\address{Department of Mathematics\\
        Vanderbilt University\\
        Nashville, Tennessee\\
        USA}
\email{gieri.simonett@vanderbilt.edu}

\author{Mathias Wilke}
\address{Martin-Luther-Universit\"at Halle-Witten\-berg\\
         Institut f\"ur Mathematik \\
         Halle (Saale), Germany}
\email{mathias.wilke@mathematik.uni-halle.de}

\thanks{This work was supported by a grant from the Simons Foundation (\#426729 and \#853237, Gieri Simonett).}

\subjclass[2010]{35Q35, 35Q30, 35B40}

 \keywords{Surface Navier-Stokes equations, surface Stokes operator, 
 $H^\infty$-calculus, critical spaces, Killing vector fields, Korn's inequality, global existence.}

\begin{abstract}
We consider a smooth, compact and embedded hypersurface $\Sigma$  without boundary  and show that the corresponding (shifted) surface Stokes operator $\omega+A_{S,\Sigma}$ admits a bounded $H^\infty$-calculus with 
angle smaller than $\pi/2$,
provided $\omega>0$.
As an application, we consider critical spaces for the Navier-Stokes equations on the surface $\Sigma$. 
In case $\Sigma$ is two-dimensional, we show that any solution with a divergence-free initial value
in $L_2(\Sigma, \mathsf{T}\Sigma)$ exists globally and converges exponentially fast  to an equilibrium, that is, to a Killing field.
\end{abstract}

\maketitle
\section{Introduction}
\noindent
Suppose $\Sigma$ is a smooth, compact, {connected}, embedded hypersurface in $\R^{d+1}$ without boundary.
We then consider the motion of an incompressible viscous fluid that completely covers $\Sigma$
and flows along $\Sigma$. The motion can be modeled by the {\em surface Navier-Stokes equations} for an incompressible viscous fluid
    \begin{equation}
    \label{NS-surface-introduction}
    \begin{aligned}
    \varrho \big(\partial_t u + \cP_\Sigma (u\cdot \nabla_\Sigma u)\big) - \cP_\Sigma\, {\rm div}_\Sigma\, \cT_\Sigma &=0 &&\text{on}\;\;\Sigma \\
    {\rm div}_\Sigma u &=0 &&\text{on} \;\; \Sigma \\
                           u(0) &= u_0  &&\text{on}\;\; \Sigma.
    \end{aligned}
    \end{equation}
 Here, the density $\varrho$ is a positive constant, $\cT_\Sigma =2\mu_s \cD_\Sigma(u) -\pi \cP_\Sigma$, and
\begin{equation*}
\label{D-Sigma}
\cD_\Sigma(u):=\frac{1}{2}\cP_\Sigma\left( \nabla_\Sigma u + [\nabla_\Sigma u]^{\sf T}\right)\cP_\Sigma
\end{equation*}
is the surface rate-of-strain tensor, with $u$ the fluid velocity and $\pi$ the pressure.
Moreover,   
$\cP_\Sigma$ denotes the orthogonal projection onto the tangent bundle ${\sf T}\Sigma$ of $\Sigma$,
${\rm div}_\Sigma$  the surface divergence, and $\nabla_\Sigma$ the surface gradient.
We refer to Chapter 2 in \cite{PrSi16} and the Appendix in \cite{PSW20} for more information concerning these objects. 

\smallskip
 The formulation \eqref {NS-surface-introduction} coincides with \cite[formula (3.2)]{JOR18}.
In that paper, the equations were derived from fundamental continuum mechanical principles.
The same equations were also derived in~\cite[formula (4.4)]{KLG17},
based on global energy principles.
We mention that the authors of  \cite{JOR18, KLG17} also consider material surfaces that
may evolve in time.

\smallskip
Existence and uniqueness of solutions to the  surface Navier-Stokes equations
\eqref{NS-surface-introduction} was established in~\cite{PSW20}. 
It was shown that the set of equilibria consists of all Killing vector fields on $\Sigma$,
and that all of these are normally stable.  
Moreover, it was shown
that~\eqref{NS-surface-introduction} can be reformulated as
\begin{equation}
\label{NS-surface-2}
\begin{aligned}
\varrho \big(\partial_t u + \cP_\Sigma (u\cdot \nabla_\Sigma u)\big)  - \mu_s( \Delta_\Sigma 
+ {\sf Ric}_\Sigma )u + \nabla_\Sigma \pi&=0 &&\text{on}\;\;\Sigma \\
{\rm div}_\Sigma u &=0 &&\text{on} \;\; \Sigma \\
                       u(0) &= u_0  &&\text{on}\;\; \Sigma,
\end{aligned}
\end{equation}
where $\Delta_\Sigma$ is the (negative) Bochner-Laplacian and ${\sf Ric}_\Sigma$ the Ricci tensor.
To be more precise, in slight abuse of the usual convention, we interpret ${\sf Ric}_\Sigma$ here as the $(1,1)$-tensor 
given in local coordinates by ${\sf Ric}^\ell_k= g^{i\ell}R_{ik}$. 
We remind that in case $d=2$, ${\sf Ric}_\Sigma u=K_\Sigma u$, with $K_\Sigma$ being the Gaussian curvature of $\Sigma$.

\smallskip
The formulation (1.2) shows that  the surface Navier-Stokes equations can be formulated by intrinsic quantities that only
depend on the geometry of the surface $\Sigma$, but not on the ambient space.
In an intrinsic formulation, the surface Navier Stokes equations~\eqref{NS-surface-2} can be stated as
\begin{equation}
\label{NS-surface-intrinsic}
\begin{aligned}
\varrho \big(\partial_t u + \nabla_u u) - \mu_s \Delta_\Sigma u- \mu_s{\sf Ric}\, u + {\sf grad }\,\pi&=0 &&\text{on}\;\;\Sigma \\
{\rm div} u &=0 &&\text{on} \;\; \Sigma \\
                       u(0) &= u_0  &&\text{on}\;\; \Sigma,
\end{aligned}
\end{equation}
where $\nabla$ is the covariant derivative (induced by the Levi-Civita connection of $\Sigma$),
and ${\sf grad}\,\pi=\nabla_\Sigma \pi$.
An inspection of the proofs then shows that all the results in \cite{PSW20}, and the results of this paper, 
are also valid for \eqref{NS-surface-intrinsic}  for any smooth Riemannian manifold $\Sigma$ without boundary.
We remind that the Bochner Laplacian $\Delta_\Sigma$ is related to the Hodge Laplacian by the formula
$\Delta_\Sigma = \Delta_H +{\sf Ric},$
with the usual identification of vector fields and one-forms by means of lowering or rising indices.

\smallskip
We would like to point out that several formulations for the
surface Navier-Stokes equations have been considered in the literature,
see~\cite{CCD17} for a comprehensive discussion, and also~\cite[Section 3.2]{JOR18}.
It has been advocated in \cite[Note added to Proof]{EbMa70}, and also in the more recent publications \cite{CCD17,SaTu20}, 
that the surface Navier-Stokes equations on a Riemannian manifold ought to be modeled by the system~\eqref{NS-surface-intrinsic}.

\smallskip
In Theorem~\ref{thm:Hinftycalc}, we show that {\em the surface Stokes operator}
\begin{equation*}
A_{S,\Sigma}u:=-2 \mu_sP_{H,\Sigma} \mathcal{P}_\Sigma{\rm div}_\Sigma\mathcal{D}_\Sigma(u)
= -\mu_sP_{H,\Sigma}\big(\Delta_\Sigma +{\sf Ric}_\Sigma \big),
\end{equation*}
with $P_{\Sigma, H}$ being the {\em surface Helmholtz projection},
has the property that 
$(\omega + A_{S,\Sigma}) $ admits a bounded $H^\infty$-calculus in $L_{q,\sigma}(\Sigma, {\sf T}\Sigma)$ with  $H^\infty$-angle $\phi_{A_{S,\Sigma}}^\infty<\pi/2$, 
provided
$\omega$ is larger than the spectral bound of $-A_{S,\Sigma}$ and $1<q<\infty$.

For the Stokes operator on {\em domains in Euclidean space} under various boundary conditions, 
the existence of an $H^\infty$-calculus (or the related property of bounded imaginary powers) 
has been obtained by 
Giga~\cite{Gig85}, Abels~\cite{Abe02}, Noll and Saal~\cite{NoSa03}, Saal~\cite{Sal06},  Pr\"uss and Wilke~\cite{PrWi18},
and Pr\"uss~\cite{Pru18}.
We also refer to the survey article by Hieber and Saal~\cite{HiSa18} for additional  references and information concerning the Stokes operator
on domains in Euclidean space.

\smallskip
Having established the existence of a bounded $H^\infty$-calculus allows us to employ the results in 
Pr\"uss, Simonett, and Wilke \cite{ PSW18, PrWi17} to establish existence and uniqueness of solutions to 
the system \eqref{NS-surface-introduction}, or \eqref{NS-surface-intrinsic}, 
for initial values $u_0$ in the {\em critical spaces} $B^{d/q-1}_{qp,\sigma}(\Sigma, {\sf T}\Sigma)$,
see Theorem~\ref{thm:strong-solution} and Theorem~\ref{thm:weakSrurfaceStokes}.

In particular, our results imply existence and uniqueness of solutions for initial values 
$u_0\in L_{q,\sigma}(\Sigma, {\sf T}\Sigma)$ for $q=d$, see Corollary~\ref{cor:Lq}.
Hence, the celebrated result of Kato~\cite{Kat84} is also valid for the surface Navier-Stokes equations.

For $d=2$, we show in Theorem~\ref{thm:convergence}
that {\em any} solution to \eqref{NS-surface-introduction} with initial value $u_0\in L_{2,\sigma}(\Sigma, {\sf T}\Sigma)$ 
 exists globally and converges exponentially fast  to an equilibrium, that is, to a Killing field.
 The proof is based on an abstract result in \cite{PSW18}, Korn's inequality (established in Theorem~\ref{thm:Korn}),
 and an energy estimate. Moreover, in Remark~\ref{rem:convergence} we show that in case $d\ge 3$, any \emph{global} solution converges to an equilibrium.

\smallskip
We refer to \cite{PSW18, PrWi18} for background information on critical spaces and for a discussion
of the existing literature concerning critical spaces for the Navier-Stokes equations (and other equations)
for domains in Euclidean space.

\smallskip
We would now like to briefly compare the results of this paper with previous results by other authors.
Existence and uniqueness of solutions for the Navier-Stokes equations~\eqref{NS-surface-intrinsic}
for initial data in Morrey and Besov-Morrey spaces
was established  by Taylor  \cite{Tay92} and Mazzucato~\cite{Maz03}, respectively; see
also ~\cite{CCD17} for a comprehensive list of references.
The authors in~\cite{Tay92, Maz03} employ  techniques of pseudo-differential operators and they make
use of the property that the Hodge Laplacian commutes with the Helmholtz projection.
In case $d=2$, global existence is proved in \cite[Proposition 6.5]{Tay92}, but that result does not establish convergence of solutions.

\smallskip
The Boussinesq-Scriven surface stress tensor $\cT_\Sigma$ has also been employed
in the situation where two incompressible fluids which are separated by a free surface, where
surface viscosity (accounting for internal friction within the interface) is included in the model,
see for instance \cite{BoPr10, PrSi16}.
Finally, we mention~\cite{JOR18, OQRY18, ReZh13,ReVo18} and the references 
contained therein for interesting numerical investigations.

\medskip
\medskip\noindent
{\bf Notation:}
We now introduce some notation and some auxiliary results that will be used in the sequel.
It follows from the considerations in \cite[Lemma A.1 and Remarks A.3]{PSW20} that
\begin{equation}
\label{notation}
(\cP_\Sigma\nabla_\Sigma u)^{\sf T}=\nabla u\quad\text{and}\quad
 \cP_\Sigma(u\cdot \nabla_\Sigma v)= \nabla_u v
\end{equation}
for tangential vector fields $u,v$, where $\nabla$ denotes the covariant derivative (or the Levi-Civita connection) on $\Sigma$.
In the following, we will occasionally take the liberty to use the shorter notation
$\nabla_u v$ and $\nabla u$ mentioned in \eqref{notation}.
We recall that for sufficiently smooth vectors fields $u$, say $u \in C^1(\Sigma,{\sf T}\Sigma)$, one has 
$\nabla u\in C(\Sigma, {\sf T}^1_1\Sigma)$, the space of all  $(1,1)$-tensors on $\Sigma$.
As the Levi-Civita connection $\nabla$ is a metric connection, we have
\begin{equation}
\label{metric-connection}
\nabla_w(u|v)= (\nabla_wu|v) + (u| \nabla_w u)
\end{equation}
for tangential vector fields $u,v,w$ on $\Sigma$, where $(u|v):=g(u,v)$ is the Riemannian metric 
(induced by the Euclidean inner product of $\R^{d+1}$ in case $\Sigma$ is embedded in~$\R^{d+1}$).
Occasionally, we also write
${\sf grad}\,\varphi$ in lieu of  $\nabla_\Sigma \varphi$
for scalar functions~$\varphi$.

\smallskip
\noindent
We use the notation 
\begin{equation*}
(u|v)_\Sigma = \int_\Sigma (u|v)\,d\Sigma,
\end{equation*}
whenever the right hand side exists, say for $u\in L_q(\Sigma, {\sf T}\Sigma)$ and $v\in L_{q'}(\Sigma, {\sf T}\Sigma)$,
where $1/q+1/q'=1$.
For $k\in \N$ and $q\in (1,\infty)$, the space $H^k_q(\Sigma, {\sf T}\Sigma)$ is defined as  the completion of 
$ C^\infty(\Sigma, {\sf T}\Sigma)$, the space of all smooth vector fields, in $L_{1,{\rm loc}}(\Sigma, {\sf T}\Sigma)$
with respect to the norm
\begin{equation*}
|u|_{H^k_q(\Sigma)}=
\left(\sum_{i=0}^k |\nabla^i u|^q_{L_q(\Sigma)}\right)^{1/q}.
\end{equation*}
The Bessel potential spaces $H^s_q(\Sigma, {\sf T}\Sigma)$ and the Besov spaces $B^s_{qp}(\Sigma, {\sf T}\Sigma)$  can then be defined
through interpolation, see for instance \cite[Section 7]{Ama13}.
It is well-known that these spaces can be given equivalent norms by means of local coordinates, see for instance \cite[Theorem 7.3]{Ama13} (for the more general context of singular and uniformly regular manifolds).
\section{The surface Stokes operator}\label{section:Surface-Stokes}

In \cite[Corollary 3.4]{PSW20} we showed that there exists a number $\omega_0>0$ such that for $\omega>\omega_0$, the system
\begin{equation}
\label{Stokes-surface-full}
\begin{aligned}
\partial_t u+\omega u - 2\mu_s \mathcal{P}_\Sigma{\rm div}_\Sigma\mathcal{D}_\Sigma(u) + \nabla_\Sigma \pi&=f &&\text{on}\;\;\Sigma \\
{\rm div}_\Sigma u &=0 &&\text{on} \;\; \Sigma \\
                       u(0) &= u_0  &&\text{on}\;\; \Sigma
\end{aligned}
\end{equation}
admits a unique solution
$$u\in H_{p,\mu}^1(\R_+;L_q(\Sigma,\mathsf{T}\Sigma))\cap L_{p,\mu}(\R_+;H_q^2(\Sigma,\mathsf{T}\Sigma)),\quad \pi\in L_{p,\mu}(\R_+;\dot{H}_q^1(\Sigma)),$$
if and only if
$$f\in L_{p,\mu}(\R_+;L_q(\Sigma,\mathsf{T}\Sigma)),\ u_0\in B_{qp}^{2\mu-2/p}(\Sigma,\mathsf{T}\Sigma)\quad\text{and}\quad {\rm div}_\Sigma u_0=0.$$
Moreover, the solution $(u,\pi)$ depends continuously on the given data $(f,u_0)$ in the corresponding spaces.

Let $P_{H,\Sigma}$ denote the \emph{surface Helmholtz projection}, defined by
$$P_{H,\Sigma} v:=v-\nabla_\Sigma\psi_v,\quad v\in L_q(\Sigma,\mathsf{T}\Sigma),$$
where $\nabla_\Sigma\psi_v\in L_q(\Sigma,\mathsf{T}\Sigma)$ is the unique solution of
$$(\nabla_\Sigma\psi_v|\nabla_\Sigma\phi)_\Sigma=(v|\nabla_\Sigma\phi)_\Sigma,\quad \phi\in\dot{H}_{q'}^1(\Sigma),$$
{thanks to Lemma \ref{lem:auxellprb1}.}
We note that $(P_{H,\Sigma}u|v)_\Sigma=(u|P_{H,\Sigma} v)_\Sigma$ for all $u\in L_q(\Sigma,\mathsf{T}\Sigma)$, $v\in L_{q'}(\Sigma,\mathsf{T}\Sigma)$, which follows directly from the definition of $P_{H,\Sigma}$ (and for smooth functions from the
surface divergence theorem).
Indeed,
\begin{align*}
(P_{H,\Sigma} u|v)_\Sigma&=(u-\nabla_\Sigma\psi_u|v)_\Sigma=(u|v)_\Sigma-(\nabla_\Sigma\psi_u|v)_\Sigma\\
&=(u|v)_\Sigma-(\nabla_\Sigma\psi_v|\nabla_\Sigma\psi_u)_\Sigma\\
&=(u|v)_\Sigma-(u|\nabla_\Sigma\psi_v)_\Sigma\\
&=(u|v-\nabla_\Sigma\psi_v)_\Sigma=(u|P_{H,\Sigma}v)_\Sigma,
\end{align*}
as $\psi_u\in \dot{H}_q^1(\Sigma)$, $\psi_v\in \dot{H}_{q'}^1(\Sigma)$.
Let
$$X_0:=L_{q,\sigma}(\Sigma,\mathsf{T}\Sigma):=P_{H,\Sigma}L_q(\Sigma,\mathsf{T}\Sigma),\quad X_1:=H_{q,\sigma}^2(\Sigma,\mathsf{T}\Sigma):=H_{q}^2(\Sigma,\mathsf{T}\Sigma)\cap X_0.$$
\noindent
The \emph{surface Stokes operator} is defined by
\begin{equation}\label{eq:StokesOp}
A_{S,\Sigma}u:=-2 \mu_sP_{H,\Sigma} \mathcal{P}_\Sigma{\rm div}_\Sigma\mathcal{D}_\Sigma(u),\quad u\in D(A_{S,\Sigma}):=X_1.
\end{equation}

Making use of the projection $P_{H,\Sigma}$, \eqref{Stokes-surface-full} with ${\rm div}_\Sigma f=0$ is equivalent to the equation
\begin{equation}
\label{Abstract-Stokes}
\partial_t u+\omega u+A_{S,\Sigma}u=f,\quad t>0,\quad u(0)=u_0.
\end{equation}
Indeed, if $(u,\pi)$ is a solution to \eqref{Stokes-surface-full}, then $u=P_{H,\Sigma}u$ solves \eqref{Abstract-Stokes} as can be seen by applying $P_{H,\Sigma}$ to the first equation in \eqref{Stokes-surface-full}. Conversely, let $u$ be a solution of \eqref{Abstract-Stokes}. Then, by definition of $P_{H,\Sigma}$,
$$A_{S,\Sigma}u=-2 \mu_sP_{H,\Sigma} \mathcal{P}_\Sigma{\rm div}_\Sigma\mathcal{D}_\Sigma(u)=-2 \mu_s\mathcal{P}_\Sigma{\rm div}_\Sigma\mathcal{D}_\Sigma(u)+2\mu_s\nabla_\Sigma\psi_v,$$
where $\psi_v\in\dot{H}_q^1(\Sigma)$ solves
$$(\nabla_\Sigma\psi_v|\nabla_\Sigma\phi)_\Sigma=(v|\nabla_\Sigma\phi)_\Sigma ,\quad \phi\in \dot{H}_{q'}^1(\Sigma),$$
with $v:= \mathcal{P}_\Sigma{\rm div}_\Sigma\mathcal{D}_\Sigma(u)\in L_q(\Sigma)$.
Defining $\pi:=2\mu_s\psi_v$, we see that $(u,\pi)$ is a solution of \eqref{Stokes-surface-full}.

In particular, the operator $A_{S,\Sigma}$ has $L_{p}$-maximal regularity, hence $-(\omega+A_{S,\Sigma})$ generates an analytic $C_0$-semigroup in $X_0$ (see for instance \cite[Proposition 3.5.2]{PrSi16}) which is exponentially stable provided $\omega>s(-A_{S,\Sigma})$, where $s(\cdot)$ denotes the spectral bound of $-A_{S,\Sigma}$. This readily implies that the operator $\omega+A_{S,\Sigma}$ is sectorial with spectral angle $\phi_{\omega+A_{S,\Sigma}}<\pi/2$.

\section{$H^\infty$-calculus}\label{sec:Hinfty}
\noindent

In this section, we are going to prove the following result.
\begin{theorem}
\label{thm:Hinftycalc}
Let $q\in (1,\infty)$ and $\Sigma$ be a smooth, compact, {connected}, embedded hypersurface in $\R^{d+1}$ without boundary. Let $A_{S,\Sigma}$ be the surface Stokes operator in $L_{q,\sigma}(\Sigma,\mathsf{T}\Sigma)$ defined in \eqref{eq:StokesOp}.

Then $\omega+A_{S,\Sigma}$ admits a bounded $H^\infty$-calculus with $H^\infty$-angle $\phi_{A_{S,\Sigma}}^\infty<\pi/2$ for each $\omega>s(-A_{S,\Sigma})$, the spectral bound of $-A_{S,\Sigma}$.
\end{theorem}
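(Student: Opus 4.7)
By Section~\ref{section:Surface-Stokes}, the operator $\omega + A_{S,\Sigma}$ is already sectorial with spectral angle less than $\pi/2$, so the task is to upgrade sectoriality to a bounded $H^\infty$-calculus. The overall strategy is to reduce, by a localization argument, to the Stokes operator on $\R^d$, for which the bounded $H^\infty$-calculus with angle below $\pi/2$ is known (Giga~\cite{Gig85}, Pr\"uss--Wilke~\cite{PrWi18}, Pr\"uss~\cite{Pru18}), and then to invoke a perturbation theorem for the $H^\infty$-calculus.

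Concretely, I would fix a finite atlas $\{(U_j,\phi_j)\}_{j=1}^N$ of coordinate charts on $\Sigma$ together with a subordinate partition of unity $\{\chi_j^2\}$. Multiplying the full resolvent Stokes system \eqref{Stokes-surface-full} by $\chi_j$ and pushing forward to $\phi_j(U_j)\subset\R^d$, the surface Stokes resolvent problem becomes a variable-coefficient Stokes resolvent system which, after freezing the metric at a point $p_j\in U_j$, decomposes into a constant-coefficient model Stokes operator on $\R^d$ plus a perturbation $B_j$ comprising Christoffel symbols, metric derivatives and the Ricci term. Since $\Sigma$ is compact and smooth, the chart diameters can be chosen so small that the principal part of $B_j$ has arbitrarily small relative bound with respect to the model operator, while the remaining zeroth-order piece is $A_{S,\Sigma}$-compact. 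Combining the Euclidean $H^\infty$ result for the model operator with the perturbation theorem for the $H^\infty$-calculus (see e.g.\ \cite[Sections~3.2,~3.3]{PrSi16}) yields an $H^\infty$-calculus for each localized problem, provided $\omega>s(-A_{S,\Sigma})$ is chosen sufficiently large.

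The main obstacle is the non-locality of $P_{H,\Sigma}$, which couples all charts through the solution of a global Neumann-type problem for the Laplace--Beltrami operator on $\Sigma$. The clean way to handle this is to work in each chart with the full Stokes system (retaining the pressure as an unknown) and to invoke the Euclidean $H^\infty$-estimates for the Stokes resolvent problem directly, so that the Helmholtz projection is never cut off. When the local estimates are then reassembled via the partition of unity, commutator errors of the form $[\chi_j,P_{H,\Sigma}]$ and $[\chi_j,\nabla_\Sigma]$ appear; these are of strictly lower order, since $[\chi_j,\nabla_\Sigma]$ is of order zero while $[\chi_j,P_{H,\Sigma}]$ gains one derivative thanks to the elliptic regularity of the auxiliary Poisson problem in Lemma~\ref{lem:auxellprb1}. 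They can therefore be absorbed by further enlarging $\omega$, yielding the bounded $H^\infty$-calculus with $\phi_{A_{S,\Sigma}}^\infty<\pi/2$.
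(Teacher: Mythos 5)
Your overall strategy (localization to charts, freezing the metric, perturbing the Euclidean Stokes operator via its known $H^\infty$-calculus and a perturbation theorem such as \cite[Theorem 3.2]{DDHPV04}) is indeed the skeleton of the paper's proof, and you correctly identify the non-locality of $P_{H,\Sigma}$ as an obstacle. But there is a genuine gap in the step where you claim that all reassembly errors are of lower order and ``can be absorbed by further enlarging $\omega$.'' The decisive error term is not a commutator of the type you list: it comes from the fact that cutting off destroys the divergence constraint, i.e.\ ${\rm div}_\Sigma(\chi_j u)=\nabla_\Sigma\chi_j\cdot u\neq 0$. Whether you correct this defect (as the paper does, by solving ${\rm div}(G_{(k)}\nabla\phi_k)=H_{(k)}(\bar u)$) or feed it into an inhomogeneous Euclidean Stokes resolvent problem with divergence data, the resulting contribution to the representation of $u=(\lambda+\omega+A_{S,\Sigma})^{-1}f$ is of the form $Tu(\lambda)$ with $T$ a fixed, $\lambda$- and $\omega$-independent operator of norm that is \emph{not} small; through $u(\lambda)$ it decays only like $(1+|\lambda|)^{-1/2}$, which is not integrable against $|h|_\infty$ along the contour. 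Hence it can neither be absorbed into the contour integral (one needs decay strictly better than $|\lambda|^{-1}$; the paper's genuinely lower-order remainder $R(\lambda)$ has decay $(1+|\lambda|)^{-3/2}$) nor made small by increasing $\omega$, since $T$ does not depend on $\omega$. The paper resolves this by observing that $T$ is compact, so $I-T$ is Fredholm of index $0$, writing $(I-T)u=S(\lambda)f+R(\lambda)f$, inverting $I-T$ on the complement of its finite-dimensional kernel, and treating the finite-rank part $Qh(\omega+A_{S,\Sigma})$ separately by an integration-by-parts (surface divergence theorem) argument combined with the sectorial estimate. This Fredholm/projection step is the core of the proof and is absent from your plan.

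Two further points. First, ``invoking Euclidean $H^\infty$-estimates for the Stokes resolvent problem with the pressure retained'' is not a well-defined functional-calculus statement: to run the Cauchy-integral definition of $h(\omega+A_{S,\Sigma})$ you need local \emph{operators} with a bounded $H^\infty$-calculus, which forces the introduction of a local (variable-coefficient) solenoidal projection $P_k^G$ as in the paper; the pressure cannot simply be carried along. Second, even once the calculus is obtained for $\omega$ large, the theorem asserts it for every $\omega>s(-A_{S,\Sigma})$; this last reduction requires an additional argument (in the paper, \cite[Corollary 3.3.15]{PrSi16}), which your proposal does not address.
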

\begin{remark}
Let $\cE$ denote the set of equilibria. We have shown in \cite{PSW20}, Proposition~4.1, that $s(-A_{S,\Sigma})=0$ provided $\cE\neq\{0\}$ und $s(-A_{S,\Sigma})<0$ if $\cE=\{0\}$.

Therefore, it follows from Theorem \ref{thm:Hinftycalc} that for each $\omega>0$ the operator $\omega+A_{S,\Sigma}$ admits a bounded $H^\infty$-calculus with $H^\infty$-angle $\phi_{A_{S,\Sigma}}^\infty<\pi/2$. In case $\cE=\{0\}$ one may set $\omega=0$.
\end{remark}

\subsection{Resolvent and pressure estimates}

We consider the following resolvent problem
\begin{equation}
\label{Stokes-surface-1}
\begin{aligned}
\lambda u+(\omega - 2\mu_s \mathcal{P}_\Sigma{\rm div}_\Sigma\mathcal{D}_\Sigma)u + \nabla_\Sigma \pi&=f &&\text{on}\;\;\Sigma \\
{\rm div}_\Sigma u &=0 &&\text{on} \;\; \Sigma,
\end{aligned}
\end{equation}
where $\omega>s(-A_{S,\Sigma})$ and $\lambda\in\Sigma_{\pi-\phi}$, $\phi>\phi_{\omega+A_{S,\Sigma}}$. By sectoriality of the operator $\omega+A_{S,\Sigma}$ (and since $0\in\rho(\omega+A_{S,\Sigma})$), it follows that for given $f\in L_q(\Sigma,\mathsf{T}\Sigma)$ there exists a unique solution
$$u\in H_q^2(\Sigma,\mathsf{T}\Sigma),\quad\pi\in \dot{H}_q^1(\Sigma)$$
of \eqref{Stokes-surface-1}
and there is a constant $C>0$ such that
\begin{equation}\label{eq:sectestimate}
(|\lambda|+1)|u|_{L_q(\Sigma)}+|u|_{H_q^2(\Sigma)}+|\nabla\pi|_{L_q(\Sigma)}\le C|f|_{L_q(\Sigma)},
\end{equation}
for all $\lambda\in\Sigma_{\pi-\phi}$. Note that without loss of generality, we may assume that $P_0\pi=\pi$, where
$$P_0v:=v-\frac{1}{|\Sigma|}\int_\Sigma v \,d\Sigma$$
for $v\in L_1(\Sigma)$.
Furthermore, if $\operatorname{div}_\Sigma f=0$, the pressure $\pi$ satisfies the estimate
\begin{equation}\label{eq:pressure_est}
|\pi|_{L_q(\Sigma)}\le C|u|_{H_q^1(\Sigma)}
\end{equation}
for some constant $C>0$. {The proof of the estimate \eqref{eq:pressure_est} follows exactly the lines of the proof
of \cite[Proposition 3.3]{PSW20}}.

\subsection{Localization}

By compactness of $\Sigma$, there exists a family of charts $\{(U_k,\varphi_k):k\in\{1,\ldots,N\}\}$ such that $\{U_k\}_{k=1}^N$ is an open covering of $\Sigma$. Let $\{\psi_k^2\}_{k=1}^N\subset C^\infty(\Sigma)$ be a partition of unity subordinate to the open covering $\{U_k\}_{k=1}^N$. Note that without loss of generality, we may assume that $\varphi_k(U_k)=B_{\mathbb{R}^d}(0,r)$. We call $\{(U_k,\varphi_k,\psi_k):k\in\{1,\ldots,N\}\}$ a \emph{localization system} for $\Sigma$.

Let $\{\tau_{(k)j}(p)\}_{j=1}^d$ denote a local basis of the tangent space $\mathsf{T}_p\Sigma$ of $\Sigma$ at $p\in U_k$ and denote by $\{\tau_{(k)}^j(p)\}_{j=1}^d$ the corresponding dual basis of the cotangent space ${\sf T}^*_p\Sigma$ at $p\in U_k$. Accordingly, we define $g_{(k)}^{ij}=(\tau_{(k)}^i|\tau_{(k)}^j)$ and $g_{(k)ij}$ is defined in a very similar way, see the Appendix in \cite{PSW20}. Then, with $\bar{u}=u\circ\varphi_k^{-1}$, $\bar{\pi}=\pi\circ\varphi_k^{-1}$ and so on, the system \eqref{Stokes-surface-1} with respect to the local charts $(U_k,\varphi_k)$, $k\in\{1,\ldots,N\}$, reads as follows.
\begin{equation}
\label{Stokes-local-coordinates}
\begin{aligned}
\lambda  \bar{u}_{(k)}^\ell+ (\omega- \mu_s \bar{g}_{(k)}^{ij}\partial_i\partial_j)\bar{u}_{(k)}^\ell + \bar{g}_{(k)}^{i\ell}\partial_i\bar{\pi}_{(k)}&=\bar{f}_{(k)}^\ell+F_{(k)}^\ell(\bar{u},\bar{\pi}) &&\text{in}\;\;\mathbb{R}^d \\
\partial_i \bar{u}_{(k)}^i &=H_{(k)}(\bar{u}) &&\text{in} \;\; \mathbb{R}^d,
\end{aligned}
\end{equation}
where
\begin{equation*} 
\bar{u}_{(k)}^\ell=(\bar{u}\bar{\psi}_k|\bar{\tau}_{(k)}^\ell),\ \bar{\pi}_{(k)}=\bar{\pi}\bar{\psi}_k,
\end{equation*}
$$\bar{f}_{(k)}^\ell=(\bar{f}\bar{\psi}_k|\bar{\tau}_{(k)}^\ell),\quad F_{(k)}^\ell(\bar{u},\bar{\pi})=\bar{\pi} \bar{g}_{(k)}^{i\ell}\partial_i\bar{\psi}_k+(B_{(k)}\bar{u}|\bar{\tau}_{(k)}^\ell),$$
$\ell\in\{1,\ldots,d\}$, $B_{(k)}$ collects all terms of order at most one and
$$H_{(k)}(\bar{u})=\bar{u}^i\partial_i\bar{\psi}_k-\bar{u}_{(k)}^j (\bar{\tau}_{(k)}^i|\partial_i\bar{\tau}_{(k)j}).$$
Here, upon translation and rotation, $\bar{g}_{(k)}^{ij}(0)=\delta_j^i$ and the coefficients have been extended in such a way that $\bar{g}_{(k)}^{ij}\in W_\infty^2(\R^d)$ and $|\bar{g}_{(k)}^{ij}-\delta_j^i|_{L_\infty(\mathbb{R}^d)}\le\eta$, where $\eta>0$ can be made as small as we wish, by decreasing the radius $r>0$ of the ball $B_{\mathbb{R}^d}(0,r)$.

In order to handle system \eqref{Stokes-local-coordinates}, we define vectors in $\R^d$ as follows:
$$\bar{u}_{(k)}:=(\bar{u}_{(k)}^1,\ldots,\bar{u}_{(k)}^d),\quad \bar{f}_{(k)}:=(\bar{f}_{(k)}^1,\ldots,\bar{f}_{(k)}^d)$$
and
$${F}_{(k)}(\bar{u},\bar{\pi}):=({F}_{(k)}^1(\bar{u},\bar{\pi}),\ldots,{F}_{(k)}^d(\bar{u},\bar{\pi})).$$
Moreover, we define the matrix $G_{(k)}=(\bar{g}_{(k)}^{ij})_{i,j=1}^d\in\R^{d\times d}$. With these notations, system \eqref{Stokes-local-coordinates} reads as
\begin{equation}
\label{Stokes-local-coordinates1a}
\begin{aligned}
\lambda  \bar{u}_{(k)}+(\omega- \mu_s (G_{(k)}\nabla|\nabla))\bar{u}_{(k)} + G_{(k)}\nabla\bar{\pi}_{(k)}&=\bar{f}_{(k)}+F_{(k)}(\bar{u},\bar{\pi}) &&\text{in}\;\;\mathbb{R}^d \\
{\rm div}\ \bar{u}_{(k)} &=H_{(k)}(\bar{u}) &&\text{in} \;\; \mathbb{R}^d.
\end{aligned}
\end{equation}
Let us remove the term $H_{(k)}(\bar{u})$, since it is not of lower order. For that purpose, we solve the equation ${\rm div}( G_{(k)}\nabla \phi_k)=H_{(k)}(\bar{u})$ by Lemma \ref{lem:auxellop1} to obtain a
unique solution {$\nabla\phi_k\in H_q^2(\R^d)^d$} with
\begin{equation}\label{Est-phi}
|\nabla\phi_k|_{H_q^2(\R^d)}\le C|H_{(k)}(\bar{u})|_{H_q^1(\R^d)},
\end{equation}
where $C$ is a positive constant.
For this, observe that  $H_{(k)}(\bar{u})$ is compactly supported and we have $\int_{\R^d}H_{(k)}(\bar{u})dx=0$.
Therefore, $H_{(k)}(\bar{u})$ induces a functional on $\dot{H}_{q'}^1(\R^d)$
with norm bounded by $C|H_{(k)}(\bar{u})|_{L_q(\R^d)}.$
To see this, choose $R>0$ such that ${\rm supp}(H_{(k)}(\bar{u}))\subset B(0,R)$ and let $\phi\in \dot H^1_{q'}(\R^d).$
Then we have
\begin{equation*}
\int_{\R^d} H_{(k)}(\bar{u})\phi\,dx =\int_{B(0,R)} H_{(k)}(\bar{u})(\phi -\hat \phi)\,dx ,
\end{equation*}
where $\hat \phi = |B(0,R)|^{-1} \int_{B(0,R)} \phi \,dx$. By the Poincar\'{e}-Wirtinger inequality,
\begin{equation*}
\begin{aligned}
|\int_{B(0,R)} H_{(k)}(\bar{u})(\phi -\hat \phi)\,dx |
& \le C |H_{(k)}(\bar{u})|_{L_q} |\nabla \phi|_{L_{q'}(B(0,R))} \\
& \le C  |H_{(k)}(\bar{u})|_{L_q} |\nabla \phi|_{L_{q'}(\R^d)}.
\end{aligned}
\end{equation*}
Let
$$\tilde{u}_{(k)}=\bar{u}_{(k)}-G_{(k)}\nabla\phi_k\quad\text{and}\quad\tilde{\pi}_{(k)}=\bar{\pi}_{(k)}+(\lambda+\omega)\phi_k.$$
It follows from \eqref{Stokes-local-coordinates1a} that the functions $(\tilde{u}_{(k)}, \tilde{\pi}_{(k)})$ then solve the system
\begin{equation}
\label{Stokes-local-coordinates-1}
\begin{aligned}
\lambda  \tilde{u}_{(k)}+(\omega- \mu_s (G_{(k)}\nabla|\nabla))\tilde{u}_{(k)}+ G_{(k)}\nabla  \tilde{\pi}_{(k)}
&=\bar{f}_{(k)}+\tilde{F}_{(k)}(\bar{u},\bar{\pi}) &&\text{in}\;\;\mathbb{R}^d \\
{\rm div}\ \tilde{u}_{(k)} &=0 &&\text{in} \;\; \mathbb{R}^d,
\end{aligned}
\end{equation}
where
$\tilde{F}_{(k)}(\bar{u},\bar{\pi}):=F_{(k)}(\bar{u},\bar{\pi})+\mu_s (G_{(k)}\nabla|\nabla)(G_{(k)}\nabla\phi_k).$
In order to remove the pressure term in \eqref{Stokes-local-coordinates-1} we introduce the projections $P_k^G$, defined by
$$P_k^Gv:=v-G_{(k)}\nabla\Phi_k,\quad v\in L_q(\R^d).$$
Here, $\nabla\Phi_k\in L_q(\R^d)$ is the unique solution of ${\rm div}(G_{(k)}\nabla\Phi_k)={\rm div}\, v$ in $\dot{H}_q^{-1}(\R^d)$, established  in Lemma \ref{lem:auxellop1}. It is readily seen that $P_k^Gv=v$ if ${\rm div}\, v=0$ and $P_k^G(G_{(k)}\nabla\tilde{\pi}_{(k)})=0$.
Applying the projection $P_k^G$ to equation \eqref{Stokes-local-coordinates-1} leads to
\begin{equation}
\label{Stokes-local-coordinates-2}
\begin{aligned}
\lambda  \tilde{u}_{(k)}+(\omega- \mu_s P_k^G(G_{(k)}\nabla|\nabla))\tilde{u}_{(k)}
&=P_k^G(\bar{f}_{(k)}+\tilde{F}_{(k)}(\bar{u},\bar{\pi})) &&\text{in}\;\;\mathbb{R}^d. \\
\end{aligned}
\end{equation}
We claim that each of the operators $\omega+A_{S,k}^G:=\omega-\mu_sP_k^G(G_{(k)}\nabla|\nabla)$ in \eqref{Stokes-local-coordinates-2} admits a bounded $H^\infty$-calculus in $P_k^GL_q(\R^d)=L_{q,\sigma}(\R^d)$, provided $\omega$ is sufficiently large. To see this, we write
$$-A_{S,k}^Gu=-A_S u+\mu_s((G_{(k)}-I)\nabla|\nabla)u-\mu_sG_{(k)}\nabla\Phi_k,$$
where $A_S=-\mu_sP_H\Delta u=-\mu_s\Delta u$ is the Stokes operator in $L_{q,\sigma}(\R^d)$ and $P_H$ is the Helmholtz projection in $\R^d$.

Recall that each matrix $G_{(k)}$ is a perturbation of the identity in $\R^{d\times d}$. Therefore,
$$|\mu_s((G_{(k)}-I)\nabla|\nabla)u|_{L_q(\R^d)}\le \eta|u|_{H_q^2(\R^d)},$$
where we may choose $\eta>0$ as small as we wish.
Furthermore,
\begin{align*}
|G_{(k)}\nabla\Phi_k|_{L_q(\R^d)}\le C|(G_{(k)}\nabla|\nabla)u-\Delta u|_{{L}_q(\R^d)}\le \eta|u|_{H_q^2(\R^d)},
\end{align*}
by Lemma \ref{lem:auxellop1}, since
$${\rm div}(G_{(k)}\nabla\Phi_k)={\rm div}\, (G_{(k)}\nabla|\nabla)u={\rm div}\, (G_{(k)}\nabla|\nabla)u-\operatorname{div}\Delta u$$
as $\operatorname{div}\Delta u=\Delta\operatorname{div} u=0$ in $\R^d$. As before, we  may choose $\eta>0$ as small as we wish.

Note that the shifted Stokes operator $\omega+A_S$ admits a bounded $H^\infty$-calculus in $L_{q,\sigma}(\R^d)$ with angle $\phi_{A_S}^\infty<\pi/2$, see e.g.\  \cite[Theorem 7.1.2]{PrSi16}. By the abstract perturbation result \cite[Theorem  3.2]{DDHPV04} (see also \cite[Section 3]{Pru18}), there exists $\omega_0>0$ such that each of the operators $\omega+ A_{S,k}^G$ admits a bounded $H^\infty$-calculus in $L_{q,\sigma}(\R^d)$ if $\omega\ge \omega_0$. Moreover, for any given $\phi_0>\phi_{A_S}^\infty$ we may assume $\phi_{A_{S,k}^G}^\infty\le \phi_0$, provided that $|G_{(k)}-I|_\infty$ is sufficiently small.

This yields the following representation of the resolvent $u=(\lambda+\omega+A_{S,\Sigma})^{-1} f$.
\begin{align*}
u&=P_{H,\Sigma}\sum_{k=1}^N\psi_k^2 u=P_{H,\Sigma}\sum_{k=1}^N\psi_k(\bar{u}_{(k)}^\ell\bar{\tau}_{(k)\ell})\circ\varphi_k\\
&=P_{H,\Sigma}\sum_{k=1}^N\psi_k((\tilde{u}_{(k)}|e_\ell)\bar{\tau}_{(k)\ell})\circ\varphi_k+P_{H,\Sigma}\sum_{k=1}^N\psi_k((G_{(k)}\nabla\phi_k|e_\ell)\bar{\tau}_{(k)\ell})\circ\varphi_k\\
&=Tu+{S(\lambda)f}+R(\lambda)f,
\end{align*}
where $\{e_\ell\}_{\ell=1}^d$ is the standard basis in $\R^d$ and
\begin{equation*}
\begin{aligned}
Tu:&=P_{H,\Sigma}\sum_{k=1}^N\psi_k((G_{(k)}\nabla\phi_k{\color{red}} |e_\ell)\bar{\tau}_{(k)\ell})\circ\varphi_k, \\
S(\lambda)f:&=P_{H,\Sigma}\sum_{k=1}^N\psi_k(((\lambda+\omega+A_{S,k}^G)^{-1}P_k^G\bar{f}_{(k)}|e_\ell)\bar{\tau}_{(k)\ell})\circ\varphi_k \\
R(\lambda)f:&=P_{H,\Sigma}\sum_{k=1}^N\psi_k(((\lambda+\omega+A_{S,k}^G)^{-1}P_k^G\tilde{F}_{(k)}(\bar{u}(f),\bar{\pi}(f))|e_\ell)\bar{\tau}_{(k)\ell})\circ\varphi_k.
\end{aligned}
\end{equation*}
In a next step, we will estimate the term $R(\lambda)f$ in $L_{q}(\Sigma,\mathsf{T}\Sigma)$. To this end, observe that the operators $P_{H,\Sigma}$ and $P_k^G$ are bounded in $L_{q}(\Sigma,\mathsf{T}\Sigma)$ and $L_{q}(\R^d)$, respectively. This, together with \eqref{eq:sectestimate}, \eqref{eq:pressure_est}, \eqref{Est-phi} and the fact that each of the operators $\omega+A_{S,k}^G$ is sectorial in $L_{q,\sigma}(\R^d)$ for $\omega$ sufficiently large, yields the estimate
\begin{equation}
\label{3/2-decay}
|R(\lambda)f|_{L_q(\Sigma)}\le \frac{C}{(|\lambda|+1)^{3/2}}|f|_{L_q(\Sigma)}
\end{equation}
for some constant $C>0$. Indeed, by \eqref{eq:sectestimate}, \eqref{eq:pressure_est}, \eqref{Est-phi}, we obtain
\begin{align*}
|\tilde{F}_{(k)}(\bar{u},\bar{\pi})|_{L_q(\R^d)}&\le C\left( |{F}_{(k)}(\bar{u},\bar{\pi})|_{L_q(\R^d)}+|\nabla\phi_k|_{H_q^2(\R^d)}\right)\\
&\le C\left(|{\pi}|_{L_q(\Sigma)}+|{u}|_{H_q^1(\Sigma)}\right)\\
&\le C|{u}|_{H_q^1(\Sigma)}\\
&\le C(1+|\lambda|)^{-1/2}((1+|\lambda|)|{u}|_{L_q(\Sigma)}+|{u}|_{H_q^2(\Sigma)})\\
&\le C(1+|\lambda|)^{-1/2}|f|_{L_q(\Sigma)}.
\end{align*}
Here we have also used complex interpolation $H_q^1(\Sigma)=[L_q(\Sigma),H_q^2(\Sigma))]_{1/2}$.

Next, observe that $T:L_{q,\sigma}(\Sigma,\mathsf{T}\Sigma)\to H_q^1(\Sigma,\mathsf{T}\Sigma)\cap L_{q,\sigma}(\Sigma,\mathsf{T}\Sigma)$,  since $G_{(k)}\in W_\infty^2(\R^d)^{d\times d}$ and $\nabla\phi_{k}\in H_q^1(\R^d)^d$ for each $k\in\{1,\ldots,N\}$, hence $T$ is compact in $L_{q,\sigma}(\Sigma,\mathsf{T}\Sigma)$. Consequently, $I-T$ is a Fredholm operator with index 0. In particular, $\ker(I-T)$ is finite dimensional and the range ${\rm ran}(I-T)$ is closed in $L_q(\Sigma)$. Let $\{v_1,\ldots,v_m\}$ be an orthonormal basis of $\ker (I-T)$ and define 
$$Q:L_{q,\sigma}(\Sigma,\mathsf{T}\Sigma)\to \ker (I-T)$$ 
by
$$Qu=\sum_{k=1}^m(u|v_k)_\Sigma v_k.$$
Then it can be readily checked that $Q$ is a projection onto $\ker (I-T)$ and it is continuous in $L_{q,\sigma}(\Sigma,\mathsf{T}\Sigma)$ for \emph{any} $q\in (1,\infty)$, since $v_k\in H_q^1(\Sigma,\mathsf{T}\Sigma)$ for any $q\in (1,\infty)$ (using a bootstrap argument). Consequently, the operator 
$$I-T:{\rm ran}(I-Q)\to{\rm ran}(I-T)$$ 
is invertible with bounded inverse.

We use the resolvent representation
$$(I-T)u={S(\lambda)f}+R(\lambda)f,$$
to conclude that the right hand side belongs to ${\rm ran}(I-T)$, hence
$$
(I-Q)u
=(I-T)^{-1}\left({S(\lambda)f}+R(\lambda)f\right).
$$
Let $\phi_0\in (\phi_{A_S}^\infty,\pi/2)$ such that $\phi_{A_{S,k}^G}^\infty\le \phi_0$ for each $k\in\{1,\ldots,N\}$. For $h\in H_0(\Sigma_{\phi})$, $\phi\in (\phi_0,\pi/2)$, we then obtain
\begin{multline*}
(I-Q)h(\omega+A_{S,\Sigma})f
=(I-T)^{-1}\Big(P_{H,\Sigma}\sum_{k=1}^N\psi_k((h(\omega+A_{S,k}^G)P_k^G\bar{f}_{(k)}|e_\ell)\bar{\tau}_{(k)\ell})\circ\varphi_k\\
+\frac{1}{2\pi i}\int_\Gamma h(-\lambda)R(\lambda)fd\lambda\Big),
\end{multline*}
with $\Gamma=(\infty,0]e^{-i(\pi-\theta)}\cup [0,\infty)e^{i(\pi-\theta)}$, $\theta\in (\phi_0,\phi)$.
Estimate \eqref{3/2-decay} then yields
$$|(I-Q)h(\omega+A_{S,\Sigma})|_{\mathcal{B}(L_q(\Sigma))}\le C|h|_\infty,$$
since each of the operators $\omega+A_{S,k}^G$ has a bounded $H^\infty$-calculus.
The remaining part $Qh(\omega+A_{S,\Sigma})$ may be treated as follows.
\begin{align*}
Qh(\omega+A_{S,\Sigma})f&=Q\frac{1}{2\pi i}\int_\Gamma h(-\lambda)(\lambda+\omega)(\lambda+\omega+A_{S,\Sigma})^{-1}f\frac{d\lambda}{\lambda+\omega}\\
&=Q\frac{1}{2\pi i}\int_\Gamma h(-\lambda)(I-A_{S,\Sigma}(\lambda+\omega+A_{S,\Sigma})^{-1})f\frac{d\lambda}{\lambda+\omega}\\
&=Qh(\omega) f-Q\frac{1}{2\pi i}\int_\Gamma h(-\lambda)A_{S,\Sigma}(\lambda+\omega+A_{S,\Sigma})^{-1}f\frac{d\lambda}{\lambda+\omega}.
\end{align*}
For the last integral, we employ the definition of the projection $Q$ from above to obtain
\begin{align*}
Q\frac{1}{2\pi i}&\int_\Gamma h(-\lambda)A_{S,\Sigma}(\lambda+\omega+A_{S,\Sigma})^{-1}f\frac{d\lambda}{\lambda+\omega}\\
&=\frac{1}{2\pi i}\sum_{k=1}^mv_k\int_\Gamma h(-\lambda)(A_{S,\Sigma}(\lambda+\omega+A_{S,\Sigma})^{-1}f|v_k)_\Sigma \frac{d\lambda}{\lambda+\omega}\\
&=2\mu_s\frac{1}{2\pi i}\sum_{k=1}^mv_k\int_\Gamma h(-\lambda)(\mathcal{D}_\Sigma(\lambda+\omega+A_{S,\Sigma})^{-1}f:\nabla_\Sigma v_k) \frac{d\lambda}{\lambda+\omega}.
\end{align*}
By \eqref{eq:sectestimate}, we therefore obtain
$$|Qh(\omega+A_{S,\Sigma})|_{\mathcal{B}(L_q(\Sigma))}\le C|h|_\infty,$$
for some constant $C>0$. Consequently, the operator $\omega+A_{S,\Sigma}$ admits a bounded $H^\infty$-calculus in $L_{q,\sigma}(\Sigma,\mathsf{T}\Sigma)$ with angle $\phi_{A_{S,\Sigma}}^\infty<\pi/2$ provided $\omega>0$ is large enough. An application of \cite[Corollary 3.3.15]{PrSi16} finally yields that it is enough to require $\omega>s(-A_{S,\Sigma})$. This completes the proof of Theorem \ref{thm:Hinftycalc}.

\section{Critical spaces}

\subsection{Strong setting}

We consider the abstract system
\begin{equation}
\label{Abstract-Navier-Stokes}
\partial_t u+A_{S,\Sigma}u=F_\Sigma(u),\quad t>0,\quad u(0)=u_0,
\end{equation}
where $F_\Sigma(u):=-P_{H,\Sigma}\mathcal{P}_\Sigma(u\cdot\nabla_\Sigma u)= -P_{H,\Sigma}\nabla_u u$. Let $q\in (1,\infty)$,
$$X_0:=L_{q,\sigma}(\Sigma,\mathsf{T}\Sigma)\quad\text{and}
\quad X_1:=D(A_{S,\Sigma})=H_{q,\sigma}^2(\Sigma,\mathsf{T}\Sigma).
$$
Furthermore, let $X_\beta=[X_0,X_1]_\beta$ for some $\beta\in (0,1)$, where $[\cdot,\cdot]_\beta$ denotes the complex interpolation functor. Then, by Theorem \ref{thm:Hinftycalc}, it holds that $X_\beta=D((\omega+A_{S,\Sigma})^\beta)$ for $\omega>s(-A_{S,\Sigma})$. In \cite[Section 3.5]{PSW20}, we determined the real and complex interpolation spaces
$(X_0,X_1)_{\alpha,p}$ and $[X_0,X_1]_\alpha$ as
\begin{align*}
[X_0,X_1]_\alpha &=H_{q,\sigma}^{2\alpha}(\Sigma,\mathsf{T}\Sigma), \\
(X_0,X_1)_{\alpha,p}&=B_{qp,\sigma}^{2\alpha}(\Sigma,\mathsf{T}\Sigma),
\end{align*}
for $\alpha\in (0,1)$ and $p\in (1,\infty),$
where
$H_{q,\sigma}^{s}(\Sigma,\mathsf{T}\Sigma):= H_{q}^{s}(\Sigma,\mathsf{T}\Sigma)\cap L_{q,\sigma}(\Sigma,\mathsf{T}\Sigma)$ and
$B_{qp,\sigma}^{s}(\Sigma,\mathsf{T}\Sigma):= B_{qp}^{s}(\Sigma,\mathsf{T}\Sigma)\cap L_{q,\sigma}(\Sigma,\mathsf{T}\Sigma)$
for  $s\in (0,2)$.

\medskip
By H\"older's inequality, the estimate
$$|F_\Sigma(u)|_{L_q(\Sigma)}\le C|u|_{L_{qr'}(\Sigma)}|u|_{H_{qr}^1(\Sigma)}$$
holds. We choose $r,r'\in (1,\infty)$, $1/r+1/r'=1$, in such a way that
$$1-\frac{d}{qr}=-\frac{d}{qr'},\quad\text{or equivalently,}\quad \frac{d}{qr}=\frac{1}{2}\left(1+\frac{d}{q}\right),$$
which is feasible if $q\in (1,d)$. Next, by Sobolev embedding, we have
$$[X_0,X_1]_{\beta}\subset H_{q}^{2\beta}(\Sigma,\mathsf{T}\Sigma)\hookrightarrow H_{qr}^1(\Sigma,\mathsf{T}\Sigma)\cap L_{qr'}(\Sigma,\mathsf{T}\Sigma),$$
provided
$$2\beta-\frac{d}{q}=1-\frac{d}{qr},\quad\text{or equivalently,}\quad
\beta=\frac{1}{4}\left(\frac{d}{q}+1\right).$$
The condition $\beta<1$ requires $q>d/3$, hence $q\in (d/3,d)$. For $q\in (d/3,d)$ we define the \emph{critical weight} by
\begin{equation*}  
\mu_c:=\frac{1}{2}\left(\frac{d}{q}-1\right)+\frac{1}{p},
\end{equation*}
with $2/p+d/q\le 3$, so that $\mu_c\in (1/p,1]$. We consider now the problem
\begin{equation}\label{Abstract-Navier-Stokes-omega}
\partial_t u+\omega u+A_{S,\Sigma}u=F_\Sigma(u)+\omega u,\quad t>0,\quad u(0)=u_0,
\end{equation}
where $\omega>s(-A_{S,\Sigma})$. It is clear that $u$ is a solution of \eqref{Abstract-Navier-Stokes} if and only if $u$ is a solution to \eqref{Abstract-Navier-Stokes-omega}. Note that for each $\omega>s(-A_{S,\Sigma})$, the operator $\omega+A_{S,\Sigma}$ admits a bounded $H^\infty$-calculus in $X_0$ with $H^\infty$-angle $\phi_{A_{S,\Sigma}}^\infty<\pi/2$. We may therefore apply \cite[Theorem 1.2]{PSW18} to \eqref{Abstract-Navier-Stokes-omega} which yields the following result.
\goodbreak
\begin{theorem}
\label{thm:strong-solution}
Let $p\in (1,\infty)$, $q\in (d/3,d)$ such that $\frac{2}{p}+\frac{d}{q}\le 3$. {Then} for any initial value $u_0\in B_{qp,\sigma}^{d/q-1}(\Sigma,\mathsf{T}\Sigma)$ there exists a unique {strong} solution
$$u\in H_{p,\mu_c}^1((0,a);L_{q,\sigma}(\Sigma,\mathsf{T}\Sigma))\cap L_{p,\mu_c}((0,a);H_{q,\sigma}^2(\Sigma,\mathsf{T}\Sigma))$$
of \eqref{Abstract-Navier-Stokes} for some $a=a(u_0)>0$, with $\mu_c=1/p+d/(2q)-1/2$. The solution exists on a maximal time interval $[0,t_+(u_0))$ and depends continuously on $u_0$. In addition, we have
$$u\in C([0,t_+);B_{qp,\sigma}^{d/q-1}(\Sigma,\mathsf{T}\Sigma))\cap C((0,t_+);B_{qp,\sigma}^{2-2/p}(\Sigma,\mathsf{T}\Sigma))$$
which means that the solution regularizes instantaneously if $2/p+d/q<3$.
\end{theorem}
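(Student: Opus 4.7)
The strategy is to recast \eqref{Abstract-Navier-Stokes} in its shifted form \eqref{Abstract-Navier-Stokes-omega} with $\omega > s(-A_{S,\Sigma})$ and then verify the hypotheses of the abstract critical-space existence result \cite[Theorem~1.2]{PSW18} in the pair $X_0 = L_{q,\sigma}(\Sigma,\mathsf{T}\Sigma)$, $X_1 = H^2_{q,\sigma}(\Sigma,\mathsf{T}\Sigma)$. Theorem~\ref{thm:Hinftycalc} supplies the operator-theoretic input: $\omega + A_{S,\Sigma}$ admits a bounded $H^\infty$-calculus with angle strictly smaller than $\pi/2$. This yields maximal $L_p$-regularity in the time-weighted spaces $L_{p,\mu}(\R_+;X_0)$ for every $\mu\in(1/p,1]$, together with the trace identification $(X_0,X_1)_{\mu-1/p,p} = B^{2\mu-2/p}_{qp,\sigma}(\Sigma,\mathsf{T}\Sigma)$ and the complex interpolation identity $X_\beta = H^{2\beta}_{q,\sigma}(\Sigma,\mathsf{T}\Sigma)$ recorded in the preceding discussion.

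The second step is the bilinear estimate for $F_\Sigma(u) = -P_{H,\Sigma}\mathcal{P}_\Sigma(u\cdot\nabla_\Sigma u)$. Using the $L_q$-boundedness of $P_{H,\Sigma}$ and $\mathcal{P}_\Sigma$, followed by H\"older's inequality, one gets for conjugate exponents $r,r'\in(1,\infty)$
\begin{equation*}
|F_\Sigma(u)-F_\Sigma(v)|_{X_0} \le C\bigl(|u|_{L_{qr'}}+|v|_{L_{qr'}}\bigr)|u-v|_{H^1_{qr}} + C|v|_{H^1_{qr}}|u-v|_{L_{qr'}}.
\end{equation*}
The Sobolev exponents of the two factors on the right-hand side coincide precisely when $1-d/(qr) = -d/(qr')$, equivalently $d/(qr) = (1+d/q)/2$, which is feasible iff $q<d$. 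Sobolev embedding on the compact manifold $\Sigma$ then yields $H^{2\beta}_q(\Sigma,\mathsf{T}\Sigma) \hookrightarrow H^1_{qr}(\Sigma,\mathsf{T}\Sigma)\cap L_{qr'}(\Sigma,\mathsf{T}\Sigma)$ for $\beta = (1+d/q)/4$. Combined with $X_\beta = H^{2\beta}_{q,\sigma}$, this exhibits $F_\Sigma$ as a continuous bilinear map from $X_\beta$ to $X_0$, as required by \cite[Theorem~1.2]{PSW18}. The constraint $\beta<1$ forces $q > d/3$, giving the admissible range $q\in(d/3,d)$.

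The critical weight is determined so that the trace space $X_{\gamma,\mu_c} = B^{2\mu_c-2/p}_{qp,\sigma}$ sits at the Navier-Stokes scale-invariant level $2\mu_c-2/p = d/q-1$; this yields $\mu_c = 1/p + d/(2q)-1/2$, and the admissibility $\mu_c\in(1/p,1]$ translates exactly into the standing hypotheses $q<d$ and $2/p+d/q\le 3$. Equivalently, $\mu_c - 1/p = 2\beta - 1$, which is the standard critical relation for a quadratic nonlinearity in the framework of \cite{PrWi17, PSW18}. Applying \cite[Theorem~1.2]{PSW18} to \eqref{Abstract-Navier-Stokes-omega} then produces a unique strong solution in the asserted weighted maximal regularity class on a maximal interval $[0,t_+(u_0))$ with continuous dependence on $u_0\in B^{d/q-1}_{qp,\sigma}(\Sigma,\mathsf{T}\Sigma)$, and the equivalence between \eqref{Abstract-Navier-Stokes} and \eqref{Abstract-Navier-Stokes-omega} gives the theorem. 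The instantaneous regularization $u\in C((0,t_+);B^{2-2/p}_{qp,\sigma})$ in the strict case $2/p+d/q<3$ is the usual parabolic smoothing in weighted maximal regularity: any $t_0\in(0,t_+)$ can be taken as a new initial time with ambient weight $\mu=1$, whose trace space is $B^{2-2/p}_{qp,\sigma}$.

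The main obstacle lies in the bilinear estimate, and more precisely in the two ingredients feeding it: the Bessel-scale identification $X_\beta = H^{2\beta}_{q,\sigma}$, which depends on the bounded $H^\infty$-calculus from Theorem~\ref{thm:Hinftycalc} (it is this identification that lets us replace an abstract domain of a fractional power by a concrete Sobolev space), and the Sobolev embedding for tangential vector fields on $\Sigma$. Both are classical once reduced to Euclidean space via a finite atlas and a partition of unity; the remaining steps are abstract applications of the quasilinear critical-space machinery.
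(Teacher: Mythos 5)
Your proposal is correct and follows essentially the same route as the paper: rewrite \eqref{Abstract-Navier-Stokes} in the shifted form \eqref{Abstract-Navier-Stokes-omega}, use Theorem~\ref{thm:Hinftycalc} to identify $X_\beta=H^{2\beta}_{q,\sigma}(\Sigma,\mathsf{T}\Sigma)$ and the trace/interpolation spaces, establish the bilinear estimate for $F_\Sigma$ via H\"older and Sobolev embedding with the same choices $d/(qr)=\tfrac12(1+d/q)$, $\beta=\tfrac14(1+d/q)$, $\mu_c=\tfrac12(d/q-1)+1/p$, and then apply \cite[Theorem 1.2]{PSW18}. The constraints $q\in(d/3,d)$, $2/p+d/q\le 3$ and the regularization statement are obtained exactly as in the paper, so there is nothing to add.
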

\begin{remark}
Note that in case $d=3$ and $p=q=2$, the initial value belongs to 
$$B_{22,\sigma}^{1/2}(\Sigma,\mathsf{T}\Sigma)=H_{2,\sigma}^{1/2}(\Sigma,\mathsf{T}\Sigma).$$
Hence, the celebrated result of Fujita \& Kato~\cite{FuKa62} holds true for the surface Navier-Stokes equations.
\end{remark}
\subsection{Weak setting}

In order to cover the case $q\ge d$, we proceed as follows. Let {$A_0=\omega+A_{S,\Sigma}$, $\omega>s(-A_{S,\Sigma})$} and recall that $X_0=L_{q,\sigma}(\Sigma,\mathsf{T}\Sigma)$. By \cite[Theorems V.1.5.1 \& V.1.5.4]{Ama95}, the pair $(X_0,A_0)$ generates an interpolation-extrapolation scale $(X_\alpha,A_\alpha)$, $\alpha\in\mathbb{R}$, with respect to the complex interpolation functor. Note that for $\alpha\in (0,1)$, $A_\alpha$ is the $X_\alpha$-realization of $A_0$ (the restriction of $A_0$ to $X_\alpha$) and
$$X_\alpha = D(A_0^\alpha)=[X_0,X_1]_\alpha=H_{q,\sigma}^{2\alpha}(\Sigma,\mathsf{T}\Sigma), $$
since $A_0$ admits a bounded $H^\infty$-calculus.

Let $X_0^\sharp:=(X_0)'$ and $A_0^\sharp:=(A_0)'$ with $D(A_0^\sharp)=:X_1^\sharp$. Then $(X_0^\sharp,A_0^\sharp)$ generates an interpolation-extrapolation scale $(X_\alpha^\sharp,A_\alpha^\sharp)$, the dual scale, and by \cite[Theorem V.1.5.12]{Ama95}, it holds that
$$(X_\alpha)'=X^\sharp_{-\alpha}\quad\text{and}\quad (A_\alpha)'=A^\sharp_{-\alpha}$$
for $\alpha\in \mathbb{R}$.
Choosing $\alpha=1/2$ in the scale $(X_\alpha,A_\alpha)$, we obtain an operator
$$A_{-1/2}:X_{1/2}\to X_{-1/2},$$
where
$X_{-1/2}=(X_{1/2}^\sharp)'$ (by reflexivity) and, since also $A_0^\sharp$ has a bounded $H^\infty$-calculus,
$$X_{1/2}^\sharp = D((A_0^\sharp)^{1/2})=[X_0^\sharp,X_1^\sharp]_{1/2}
=H_{q',\sigma}^1(\Sigma,\mathsf{T}\Sigma),$$
with $p'=p/(p-1)$ being the conjugate exponent to $p\in (1,\infty)$.  Moreover, we have $A_{-1/2}=(A_{1/2}^\sharp)'$ and $A_{1/2}^\sharp$ is the restriction of $A_0^\sharp$ to $X_{1/2}^\sharp$. Thus, the operator
$A_{-1/2}:X_{1/2}\to X_{-1/2}$
inherits the property of a bounded $H^\infty$-calculus with $H^\infty$-angle $\phi_{A_{-1/2}}^\infty<\pi/2$ from the operator $A_0$.

Since $A_{-1/2}$ is the closure of $A_0$ in $X_{-1/2}$ it follows that $A_{-1/2}u=A_0u$ for $u\in X_1=D(A_0)=H_{q,\sigma}^2(\Sigma,\mathsf{T}\Sigma)$ and thus, for all $v\in X_{1/2}^\sharp$, it holds that
$$\la A_{-1/2}u,v\ra=(A_0u|v)_{\Sigma}=2\mu_s\int_\Sigma \cD_\Sigma(u):\cD_\Sigma(v)\, d\Sigma+{\omega(u|v)_\Sigma},$$
where we made use of the surface divergence theorem. Using that $X_1$ is dense in $X_{1/2}$, we obtain the identity
$$
\la A_{-1/2}u,v\ra=2\mu_s\int_\Sigma \cD_\Sigma(u):\cD_\Sigma(v)\, d\Sigma+{\omega(u|v)_\Sigma},
$$
valid for all $(u,v)\in X_{1/2}\times X_{1/2}^\sharp$. We call the operator $A_{S,\Sigma}^{\sf w}:=A_{-1/2}-\omega$ the \textbf{weak surface Stokes operator}, given by its representation
$$\la A_{S,\Sigma}^{\sf w}u,v\ra=2\mu_s\int_\Sigma \cD_\Sigma(u):\cD_\Sigma(v)\, d\Sigma,\quad (u,v)\in X_{1/2}\times X_{1/2}^\sharp.$$
Multiplying \eqref{Abstract-Navier-Stokes} by a function $\phi\in X_{1/2}^\sharp= H_{q',\sigma}^1(\Sigma,\mathsf{T}\Sigma)$ and using the surface divergence theorem, we obtain the weak formulation
\begin{equation}
\label{eq:weakSurfaceNS}
\partial_t u+A_{S,\Sigma}^{\sf w}u=F_\Sigma^{\sf w}(u),\quad u(0)=u_0,
\end{equation}
in $X_{-1/2}$, where
$$\la F_\Sigma^{\sf w}(u),\phi\ra:=(\nabla_u\phi|u)_\Sigma.$$
Note that $u$ is a solution of \eqref{eq:weakSurfaceNS} if and only if $u$ solves
\begin{equation}
\label{eq:weakSurfaceNS-omega}
\partial_t u+\omega u+A_{S,\Sigma}^{\sf w}u=F_\Sigma^{\sf w}(u)+\omega u,\quad u(0)=u_0,
\end{equation}
in $X_{-1/2}$.
We will apply Theorem 1.2 in \cite{PSW18} to \eqref{eq:weakSurfaceNS-omega} with the choice
 $$X_0^{\sf w}=X_{-1/2}\quad \text{and}\quad X_1^{\sf w}=X_{1/2}.$$
 For that purpose, we will first characterize some relevant interpolation spaces.
Let
 \begin{equation}
 \label{spaces}
 \begin{aligned}
 &H^s_{q,\sigma}(\Sigma,\mathsf{T}\Sigma):=
 \left\{
 \begin{aligned}
&H^s_q(\Sigma,\mathsf{T}\Sigma)\cap L_{q,\sigma}(\Sigma,\mathsf{T}\Sigma), & \qquad  0\le s \le 1,\\
&\big(H^{-s}_{q',\sigma}(\Sigma,\mathsf{T}\Sigma)\big)',   & \ \ \qquad -1\le s<0,
 \end{aligned}
 \right. \\
 & & \\
 &B^s_{qp,\sigma}(\Sigma,\mathsf{T}\Sigma):=
 \left\{
 \begin{aligned}
&B^s_{qp}(\Sigma,\mathsf{T}\Sigma)\cap L_{q,\sigma}(\Sigma,\mathsf{T}\Sigma), & \quad 0 < s \le 1,\\
&(X_0^{\sf w},X_1^{\sf w})_{1/2,p},& \quad s=0,\\
&\big(B^{-s}_{q'p',\sigma}(\Sigma,\mathsf{T}\Sigma)\big)',   & \quad -1\le s<0.
 \end{aligned}
 \right.
 \end{aligned}
 \end{equation}
By similar arguments as in \cite[Section 2.3]{PrWi18} we obtain
\begin{equation}
\label{interpolation}
 \begin{aligned}
 & [X_{-1/2}, X_{1/2}]_\theta     \ \, =\  H^{2\theta -1}_{q,\sigma}(\Sigma,\mathsf{T}\Sigma),\quad && \theta \in (0,1), \\
 & (X_{-1/2}, X_{1/2})_{\theta,p}  =  B^{2\theta -1}_{qp,\sigma}(\Sigma,\mathsf{T}\Sigma),\quad && \theta \in (0,1) . \\
\end{aligned}
 \end{equation}
Next we show that the nonlinearity
$F_\Sigma^{\sf w}:X_\beta^{\sf w} \to X_{0}^{\sf w} $
is well defined, where
$X_\beta^{\sf w}$ denotes the complex interpolation space, that is, $X^{\sf w}_\beta:=[X_0^{\sf w},X_1^{\sf w}]_{\beta} $ for $\beta\in (1/2,1).$
By \eqref{spaces}, \eqref{interpolation} and Sobolev embedding, we have
\begin{equation}
\label{embedding}
X_\beta^{\sf w} \hookrightarrow
H_q^{2\beta-1}(\Sigma,\mathsf{T}\Sigma) \hookrightarrow L_{2q}(\Sigma,\mathsf{T}\Sigma),
\end{equation}
provided that $2\beta-1\ge \frac{d}{2q}$.
From now on, we assume $2\beta-1=\frac{d}{2q}$, which means $q>d/2$ as $\beta<1$.
Then, by H\"older's inequality and \eqref{embedding}, we obtain
$$|\la F_\Sigma^{\sf w}(u),\phi\ra|\le |u|_{L_{2q}(\Sigma)}^2 |\phi|_{H^1_{q'}(\Sigma)}
\le C |u|^2_{X_\beta^{\sf w}}\,|\phi|_{H^1_{q'}(\Sigma)},$$
showing that
$$F_\Sigma^{\sf w}: X_\beta^{\sf w} \to X^{\sf w}_0\quad\text{with}\quad  |F_\Sigma^{\sf w}(u)|_{X_0^{\sf w}}\le C |u|^2_{X^{\sf w}_\beta}.$$
For $2\beta-1=d/2q$, the critical weight $\mu_c\in (1/p,1]$ is given by $\mu_c:=\mu^{\sf w}_c=1/p+d/2q$ and the corresponding critical trace space in the weak setting reads
\begin{equation*} 
X_{\gamma,\mu_c}^{\sf w}=(X_0^{\sf w},X_1^{\sf w})_{\mu_c-1/p,p}= B_{qp,\sigma}^{d/q-1}(\Sigma,\mathsf{T}\Sigma).
\end{equation*}
Note that in case $q\in (d/2,d)$,
the critical spaces in the weak and strong setting coincide.

The existence and uniqueness result for \eqref{eq:weakSurfaceNS} in critical spaces reads as follows.
\goodbreak
\begin{theorem}
\label{thm:weakSrurfaceStokes}
Let $p\in (1,\infty)$ and $q\in (d/2,\infty)$ such that $\frac{2}{p}+\frac{d}{q}\le 2$.
Then for any initial value  $u_0\in B_{qp,\sigma}^{d/q-1}(\Sigma,\mathsf{T}\Sigma)$ there exists a unique solution
$$u\in H_{p,\mu_c}^1((0,a); H^{-1}_{q,\sigma}(\Sigma,\mathsf{T}\Sigma)
)\cap L_{p,\mu_c}((0,a); H^1_{q,\sigma}(\Sigma,\mathsf{T}\Sigma))$$
of \eqref{eq:weakSurfaceNS} for some $a=a(u_0)>0$, with $\mu_c =1/p+d/2q$. 
The solution exists on a maximal time interval $[0,t_+(u_0))$ and depends continuously on $u_0$. Moreover,
\begin{equation*}
u\in C([0,t_+); B_{qp,\sigma}^{d/q-1}(\Sigma,\mathsf{T}\Sigma)).
\end{equation*}
Suppose  $p>2$ and $q\ge d$. Then each solution with $u_0\in B_{qp,\sigma}^{d/q-1}(\Sigma,\mathsf{T}\Sigma)$ satisfies in addition
\begin{equation*}
u\in H^1_{p,{\rm loc}}((0,t^+); L_{q,\sigma}(\Sigma, {\sf T}\Sigma))\cap L_{p,{\rm loc}}((0, t^+); H^2_{q,\sigma}(\Sigma, {\sf T}\Sigma)).
\end{equation*}
Hence, in this case, each solution regularizes instantaneously and becomes a strong solution.
\end{theorem}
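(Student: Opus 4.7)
The plan is to apply \cite[Theorem 1.2]{PSW18} to the shifted weak equation \eqref{eq:weakSurfaceNS-omega} with base pair $(X_0^{\sf w}, X_1^{\sf w}) = (X_{-1/2}, X_{1/2})$. All three hypotheses of the abstract semilinear result have been prepared in the preceding paragraphs: the operator $A_{-1/2} = \omega + A_{S,\Sigma}^{\sf w}$ has a bounded $H^\infty$-calculus of angle $<\pi/2$ on $X_0^{\sf w}$ (and therefore maximal $L_p$-regularity with temporal weights), inherited from Theorem~\ref{thm:Hinftycalc} by extrapolation and duality; the nonlinearity obeys $|F_\Sigma^{\sf w}(u)|_{X_0^{\sf w}} \le C|u|_{X_\beta^{\sf w}}^2$ at the critical exponent $2\beta - 1 = d/(2q)$; and the standing hypothesis $2/p + d/q \le 2$ is equivalent to admissibility of the critical weight $\mu_c = 1/p + d/(2q) \in (1/p, 1]$. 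Combined with the trace-space identification $X_{\gamma, \mu_c}^{\sf w} = B^{d/q-1}_{qp,\sigma}(\Sigma, \mathsf{T}\Sigma)$ from \eqref{interpolation}, the abstract theorem immediately yields local existence, uniqueness, continuous dependence, and the continuity $u \in C([0, t_+); B^{d/q-1}_{qp,\sigma})$.

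For the regularization assertion (case $p > 2$, $q \ge d$), the strategy is to treat any $t_0 \in (0, t_+)$ as a new initial time for the \emph{strong} formulation \eqref{Abstract-Navier-Stokes-omega}, working in the pair $(X_0, X_1) = (L_{q,\sigma}, H^2_{q,\sigma})$ with time weight $\mu = 1/2$. Since $p > 2$, the weight satisfies $\mu > 1/p$, and the corresponding trace space $(X_0, X_1)_{\mu - 1/p, p} = B^{1-2/p}_{qp,\sigma}$ coincides with $X_{\gamma, 1}^{\sf w} = B^{1-2/p}_{qp,\sigma}$, to which $u(t_0)$ belongs by the weak-setting abstract theorem at positive times. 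The appropriate interpolation exponent is $\beta = 3/4 - 1/(2p) \in (0,1)$, for which $\mu = 2\beta - 1 + 1/p$; with this $\beta$, the bilinear estimate $|F_\Sigma(u)|_{L_q} \le C|u|_{X_\beta}^2$ follows verbatim from the H\"older/Sobolev computation of Section~4.1, since $\beta \ge (d/q+1)/4$ is precisely equivalent to $2/p + d/q \le 2$.

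A second invocation of \cite[Theorem 1.2]{PSW18}, now in the strong setting, then produces a local strong solution starting at $t_0$ that belongs to $H^1_{p,\mu}((t_0, t_0 + a); L_{q,\sigma}) \cap L_{p,\mu}((t_0, t_0 + a); H^2_{q,\sigma})$. Any such strong solution is \emph{a fortiori} in $H^1_{p,\mu}(H^{-1}_{q,\sigma}) \cap L_{p,\mu}(H^1_{q,\sigma})$, so by uniqueness in the weak class it must agree with the original weak $u$ on their common interval. Since $t_0 \in (0, t_+)$ is arbitrary, we conclude $u \in H^1_{p,{\rm loc}}((0, t_+); L_{q,\sigma}) \cap L_{p,{\rm loc}}((0, t_+); H^2_{q,\sigma})$, as claimed.

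The main obstacle is the regularization step, specifically the simultaneous matching of three scaling conditions: (i) the improved trace regularity $u(t_0) \in B^{1-2/p}_{qp,\sigma}$ supplied by the weak theory; (ii) admissibility of the weight $\mu = 1/2$, which requires $p > 2$ and thereby circumvents the $q < d$ restriction built into Theorem~\ref{thm:strong-solution}; and (iii) the subcriticality of the associated $\beta = 3/4 - 1/(2p)$ in the bilinear estimate for $F_\Sigma$. All three conditions close off precisely under $p > 2$ and $2/p + d/q \le 2$, after which the abstract theorem finishes the proof.
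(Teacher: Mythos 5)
Your first part (local existence, uniqueness, continuous dependence, continuity into $B^{d/q-1}_{qp,\sigma}$) is exactly the paper's argument: apply \cite[Theorem 1.2]{PSW18} in the pair $(X_{-1/2},X_{1/2})$ using the inherited $H^\infty$-calculus, the quadratic estimate with $2\beta-1=d/(2q)$, and the trace-space identification \eqref{interpolation}. For the regularization step your route differs only mildly from the paper's: you re-derive local strong solvability from \cite[Theorem 1.2]{PSW18} with $\mu=1/2$, $\beta=3/4-1/(2p)$, whereas the paper simply quotes \cite[Theorem 3.5]{PSW20} with initial value $u(t_0)\in B^{1-2/p}_{qp,\sigma}\hookrightarrow B^{2\mu-2/p}_{qp,\sigma}$. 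That is acceptable, but note that the bilinear estimate does \emph{not} follow ``verbatim'' from Section~4.1: the balanced H\"older split there, $\frac{d}{qr}=\frac12\bigl(1+\frac dq\bigr)$, is only feasible for $q<d$, so for $q\ge d$ you must choose an unbalanced split (possible, since $2\beta-1=\frac12-\frac1p>0$ and $4\beta-1\ge \frac dq$), a small but necessary modification.

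The genuine gap is in your final identification step. You claim the strong solution starting at $t_0$ is ``a fortiori'' in the weak class and then invoke ``uniqueness in the weak class.'' But the uniqueness you have established in the weak setting (via \cite[Theorem 1.2]{PSW18}) holds only within the class with the \emph{critical} weight $\mu_c^{\sf w}=1/p+d/(2q)$, while the strong solution produced with weight $\mu=1/2$ is only known to lie in $H^1_{p,1/2}(H^{-1}_{q,\sigma})\cap L_{p,1/2}(H^1_{q,\sigma})$. Since larger weights give smaller classes, the inclusion you need, namely class-$\tfrac12\subset$ class-$\mu_c^{\sf w}$, requires $\mu_c^{\sf w}\le \tfrac12$, i.e.\ $2/p+d/q\le 1$; and applying the abstract theorem in the weak setting with weight $\tfrac12$ to get uniqueness in the larger class requires the same inequality. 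Under the hypotheses of the theorem this fails on a substantial part of the admissible range --- for instance for every $q=d$, $p>2$, which is precisely the case needed for Corollary~\ref{cor:Lq} --- so as written the argument does not close, and trying to go the other way (uniqueness in the strong class) is circular, since membership of the weak solution in the strong class is exactly what is being proved. This is why the paper invokes the dedicated weak--strong uniqueness result \cite[Theorem 3.4(c)]{MPS19} at this point; you need either that result or an equivalent argument (e.g.\ a bootstrap through intermediate scales showing the weak solution itself belongs to the strong class near $t_0$) to conclude $v(t)=u(t_0+t)$ and hence the asserted local strong regularity on $(0,t^+)$.
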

\begin{proof}
 Since $A_{-1/2}=\omega+A_{S,\Sigma}^{\sf w}$ admits a bounded $H^\infty$-calculus in $X_{-1/2}$ with $H^\infty$-angle $\phi_{A_{-1/2}}^\infty<\pi/2$,
 the first assertion follows readily from \cite[Theorem 1.2]{PSW18}.
 
 \smallskip\noindent
 Suppose that $p>2$ and $q\ge d$. Then $\mu_c=1/p + d/2q<1$ and \cite[Theorem 1.2]{PSW18} yields
$u\in  C((0,t_+); B^{1-2/p}_{qp,\sigma} (\Sigma,\mathsf{T}\Sigma)).$
Therefore, 
\begin{equation*}
u(t)\in X_{\gamma,1}^{\sf w}=(X_{-1/2},X_{1/2})_{1-1/p,p}=B_{qp,\sigma}^{1-2/p}(\Sigma,\mathsf{T}\Sigma),
\end{equation*}
for $t\in (0,t_+(u_0))$. 
Noting that for $\mu\in (1/p,1/2]$ we have the embedding
$$B_{qp,\sigma}^{1-2/p}(\Sigma,\mathsf{T}\Sigma)\hookrightarrow B_{qp,\sigma}^{2\mu-2/p}(\Sigma,\mathsf{T}\Sigma)$$
at our disposal, we may now solve \eqref{Abstract-Navier-Stokes} by \cite[Theorem 3.5]{PSW20} with initial value $u(t_0)$, $t_0\in (0,t_+(u_0))$, to obtain a strong solution.
The assertion of the theorem follows now from uniqueness, see  \cite[Theorem 3.4(c)]{MPS19}.
\end{proof}
\begin{corollary}
\label{cor:Lq}
Suppose  $d\ge 2$. Then equation~\eqref{eq:weakSurfaceNS} has for each initial value $u_0\in L_{d,\sigma}(\Sigma, {\sf T}\Sigma)$
a unique solution which enjoys the regularity properties of Theorem~\ref{thm:weakSrurfaceStokes}
with $q=d$ for each fixed $p$ with  $p>2$ and $p\ge d$.
\end{corollary}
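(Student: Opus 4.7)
The plan is to invoke Theorem~\ref{thm:weakSrurfaceStokes} with $q=d$ and the given $p$. The hypotheses of that theorem are easily verified: $q=d\in(d/2,\infty)$ since $d\ge 2$, and
\[
\frac{2}{p}+\frac{d}{q}=\frac{2}{p}+1\le 2
\]
holds as soon as $p\ge 2$, which is ensured by $p>2$. With this choice of $(p,q)$ the critical trace space is
\[
B^{d/q-1}_{qp,\sigma}(\Sigma,\mathsf{T}\Sigma)=B^0_{dp,\sigma}(\Sigma,\mathsf{T}\Sigma).
\]

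The essential step is therefore to establish the embedding
\[
L_{d,\sigma}(\Sigma,\mathsf{T}\Sigma)\hookrightarrow B^0_{dp,\sigma}(\Sigma,\mathsf{T}\Sigma)\qquad\text{whenever } p\ge d.
\]
By \eqref{spaces} and \eqref{interpolation} the two spaces in question can be identified as
\[
L_{d,\sigma}(\Sigma,\mathsf{T}\Sigma)=[X_{-1/2},X_{1/2}]_{1/2}
\quad\text{and}\quad
B^0_{dp,\sigma}(\Sigma,\mathsf{T}\Sigma)=(X_{-1/2},X_{1/2})_{1/2,p},
\]
so that the required inclusion reduces to the well-known mixed complex--real interpolation embedding $L_q\hookrightarrow B^0_{q,p}$, valid whenever $p\ge\max(2,q)$. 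I would transfer this embedding from $\R^d$ to $\Sigma$ by means of the equivalent norms via local charts furnished by \cite[Theorem 7.3]{Ama13}; on $\R^d$ itself the embedding follows from the identification $L_q=F^0_{q,2}$ combined with the standard Triebel--Lizorkin--Besov inclusion $F^0_{q,2}\hookrightarrow B^0_{q,p}$ for $p\ge\max(2,q)$. In our setting $q=d\ge 2$ and $p\ge d$, so $p\ge\max(2,d)=d$ and the embedding applies.

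Once the embedding is in hand, Theorem~\ref{thm:weakSrurfaceStokes} applied to any $u_0\in L_{d,\sigma}(\Sigma,\mathsf{T}\Sigma)\subset B^0_{dp,\sigma}(\Sigma,\mathsf{T}\Sigma)$ produces a unique solution with the regularity stated there on a maximal interval $[0,t_+(u_0))$. Moreover, since $p>2$ and $q=d\ge d$, the last assertion of Theorem~\ref{thm:weakSrurfaceStokes} applies and yields the instantaneous regularization into a strong solution, completing the argument. The single genuine obstacle is the Besov embedding $L_d\hookrightarrow B^0_{d,p}$ above; the remainder of the proof is purely a verification of the hypotheses of the quoted theorem.
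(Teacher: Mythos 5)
Your proposal is correct and follows essentially the same route as the paper: verify the hypotheses of Theorem~\ref{thm:weakSrurfaceStokes} with $q=d$ and reduce everything to the embedding $L_{d,\sigma}(\Sigma,\mathsf{T}\Sigma)\hookrightarrow B^0_{dp,\sigma}(\Sigma,\mathsf{T}\Sigma)$ for $p>2$, $p\ge d$. The only difference is cosmetic: the paper simply cites Bergh--L\"ofstr\"om (Theorems 6.2.4 and 6.4.4) for $L_q\hookrightarrow B^0_{qp}$, $p\ge\max(2,q)$, whereas you sketch the same standard fact via $L_q=F^0_{q,2}\hookrightarrow B^0_{q,p}$ and transfer to $\Sigma$ by local charts.
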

\begin{proof}
Suppose $p>2$ and $p\ge d$. Then the 
embedding 
$
L_{d,\sigma}(\Sigma, {\sf T}\Sigma)\hookrightarrow B^0_{dp,\sigma}(\Sigma, {\sf T}\Sigma)
$
holds, see for instance \cite[Theorem 6.2.4 and Theorem 6.4.4]{BeLo76}.
The assertion follows now from Theorem~\ref{thm:weakSrurfaceStokes} with $q=d$.
\end{proof}
\medskip
We now consider the limiting case $p=q=d=2$.
In this case, we have $\mu_c =1$ and the corresponding critical trace space is given by
\begin{equation*} 
X_{\gamma,\mu_c}^{\sf w}=(X_{-1/2},X_{1/2})_{1/2,2}=
[X_{-1/2},X_{1/2}]_{1/2}=L_{2,\sigma}(\Sigma,\mathsf{T}\Sigma),
\end{equation*}
see for example \cite[Remark 1.18.10.3]{Tri78}.

We may now also extend each weak solution to a strong solution. To this end, observe that
$$X_{\gamma,\mu_c}^{\sf w}=(X_{-1/2},X_{1/2})_{1/2,2}\hookrightarrow (X_{-1/2},X_{1/2})_{1/2,r}$$
for any $r\in (2,\infty)$. Then we solve \eqref{eq:weakSurfaceNS} by Theorem \ref{thm:weakSrurfaceStokes}
with $u_0\in (X_{-1/2},X_{1/2})_{1/2,r}$ and $\mu_c=1/r+1/2$ (choosing $p=r$ and $q=d=2$). This yields
$$u\in H_{r,\mu_c}^1((0,a);X_0^{\sf w})\cap L_{r,\mu_c}((0,a);X_1^{\sf w})$$
with  $X_1^{\sf w}=H_{2,\sigma}^1(\Sigma,\mathsf{T}\Sigma)$ and $X_0^{\sf w}=H^{-1}_{2,\sigma}(\Sigma,\mathsf{T}\Sigma)$. Since now
$u(t)\in B_{2r,\sigma}^{1-2/r}(\Sigma,\mathsf{T}\Sigma)$ for $t\in (0,a)$, we may argue as above to conclude that the weak solution regularizes to a strong solution
\begin{equation*}
u\in H^1_{r,{\rm loc}}((0,a); L_{2,\sigma}(\Sigma,\mathsf{T}\Sigma))\cap L_{r,{\rm loc}}((0,a); H^2_{2,\sigma}(\Sigma,\mathsf{T}\Sigma)).
\end{equation*}
Moreover,
\begin{equation*}
u(t)\in (L_{2,\sigma}(\Sigma,\mathsf{T}\Sigma), H^2_{2,\sigma}(\Sigma,\mathsf{T}\Sigma))_{1-1/r ,r}=B^{2-2/r}_{2r,\sigma}(\Sigma,\mathsf{T}\Sigma)
\hookrightarrow B^{2\mu-2/p}_{qp,\sigma}(\Sigma,\mathsf{T}\Sigma)
\end{equation*}
for $t\in (0,a)$,
provided $q\ge 2$, $p\ge r$ and
$2-2/r-d/2 \ge 2\mu -2/p\ - d/q$. For $d=2$ this means
$$\mu \le 1/2 + 1/p + 1/q -1/r.$$
We now solve \eqref{Abstract-Navier-Stokes} with initial value
$u(t_0)\in B^{2\mu-2/p}_{qp,\sigma}(\Sigma,\mathsf{T}\Sigma)$ by \cite[Theorem 3.5]{PSW20}
to obtain a solution
$$
v \in H^1_{p,\mu}((0,a); L_{q,\sigma}(\Sigma,\mathsf{T}\Sigma))\cap L_{p,\mu}((0,a); H^2_{q,\sigma}(\Sigma,\mathsf{T}\Sigma)).
$$
As in the proof of \cite[Theorem 3.4(c)]{MPS19} we conclude that uniqueness holds, that is, $v(t)=u(t_0 +t)$.
As $t_0$ can be chosen arbitrarily small, this implies that $u$ shares the regularity properties of $v$ for $ t>0$.
We have shown the following result.
\goodbreak
\begin{theorem}
\label{weak-strong-L2}
Let $d=2$.
Then for any $u_0\in L_{2,\sigma}(\Sigma,\mathsf{T}\Sigma)$, problem \eqref{eq:weakSurfaceNS} admits a unique solution
\begin{equation*}
u\in H^1_{2}((0, a), H^{-1}_{2,\sigma}(\Sigma,\mathsf{T}\Sigma))\cap L_{2}((0, a);H^1_{2,\sigma}(\Sigma,\mathsf{T}\Sigma))
\end{equation*}
for some $a=a(u_0)>0$. The solution exists on a maximal time interval $[0,t^+(u_0))$.
In addition, we have
\begin{equation}
\label{continuous-H1}
u\in  C([0,t^+); L_{2,\sigma}(\Sigma,\mathsf{T}\Sigma)).
\end{equation}

\noindent
Furthermore, each solution satisfies
\begin{equation*}
u\in H^1_{p,{\rm loc}}((0, t^+); L_{q,\sigma}(\Sigma,\mathsf{T}\Sigma))\cap L_{p,{\rm loc}}((0, t^+) ; H^2_{q,\sigma}(\Sigma,\mathsf{T}\Sigma))
\end{equation*}
for any fixed $q\in (1,\infty)$ and any fixed $p\in (2,\infty)$.
Therefore, any solution with initial value $u_0\in L_{2,\sigma}(\Sigma,\mathsf{T}\Sigma)$ regularizes instantaneously and becomes a strong $L_p$-$L_q$ solution.
\end{theorem}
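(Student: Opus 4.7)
The plan is to read off existence, uniqueness, and the $C([0,t^+);L_{2,\sigma})$ continuity directly from Theorem \ref{thm:weakSrurfaceStokes} with $p=q=2$, and then to bootstrap each solution to a strong $L_p$--$L_q$ solution for arbitrary $p>2$ and $q\in(1,\infty)$ in two steps, exactly in the spirit of the discussion preceding the statement.

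I would first verify that the admissibility hypothesis $2/p+d/q\le 2$ of Theorem \ref{thm:weakSrurfaceStokes} holds with equality when $p=q=d=2$. The critical weight becomes $\mu_c=1/p+d/(2q)=1$, and the identification
$$
X_{\gamma,\mu_c}^{\sf w}=(X_{-1/2},X_{1/2})_{1/2,2}=[X_{-1/2},X_{1/2}]_{1/2}=L_{2,\sigma}(\Sigma,\mathsf{T}\Sigma)
$$
places $u_0$ in the critical trace space. Theorem \ref{thm:weakSrurfaceStokes} then produces a unique weak solution on a maximal interval $[0,t^+(u_0))$ with the claimed $L_2$--$H^1_{2,\sigma}$ and $H^1_2$--$H^{-1}_{2,\sigma}$ regularity, continuous dependence on $u_0$, and \eqref{continuous-H1}.

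For the instantaneous strong regularization I would fix $r\in (2,\infty)$ and apply Theorem \ref{thm:weakSrurfaceStokes} a second time with $(p,q)=(r,2)$. The real-interpolation embedding
$$
L_{2,\sigma}(\Sigma,\mathsf{T}\Sigma)=(X_{-1/2},X_{1/2})_{1/2,2}\hookrightarrow (X_{-1/2},X_{1/2})_{1/2,r}=B^0_{2r,\sigma}(\Sigma,\mathsf{T}\Sigma)
$$
puts $u_0$ in the admissible critical trace space for this new parameter choice, so the second (strong-regularization) part of Theorem \ref{thm:weakSrurfaceStokes}, which applies since $r>2$ and $q=d=2$, promotes $u$ to a strong $L_r$--$L_{2,\sigma}$ solution on $(0,t^+)$. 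The trace identity for maximal-regularity spaces then places $u(t)$ in $B^{2-2/r}_{2r,\sigma}(\Sigma,\mathsf{T}\Sigma)$ for each $t\in (0,t^+)$.

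To reach arbitrary $p>2$ and $q\in(1,\infty)$, I would shrink $r$ so that $r\in (2,p]$ and pick $\mu\in (1/p,1]$ satisfying the Sobolev--Besov embedding condition $\mu\le 1/2+1/p+1/q-1/r$ (the embedding condition $2-2/r-d/2\ge 2\mu-2/p-d/q$ in $d=2$); this yields
$$
B^{2-2/r}_{2r,\sigma}(\Sigma,\mathsf{T}\Sigma)\hookrightarrow B^{2\mu-2/p}_{qp,\sigma}(\Sigma,\mathsf{T}\Sigma).
$$
Fixing $t_0\in (0,t^+)$ and taking $u(t_0)$ as initial datum, \cite[Theorem 3.5]{PSW20} supplies a strong solution $v$ of \eqref{Abstract-Navier-Stokes} in the $L_{p,\mu}$--$H^2_{q,\sigma}$ class; the uniqueness argument in \cite[Theorem 3.4(c)]{MPS19} then identifies $v(\cdot)=u(t_0+\cdot)$, and since $t_0>0$ can be chosen arbitrarily small the strong regularity propagates to all of $(0,t^+)$. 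The main difficulty is the parameter bookkeeping in this last step, ensuring that $\mu\in (1/p,1]$, $r\in (2,p]$, and the embedding condition are simultaneously feasible for any prescribed $(p,q)$; since $1/r<1/2\le 1/2+1/q$, the upper bound on $\mu$ strictly exceeds $1/p$, so a valid $\mu$ always exists and no new estimate beyond the cited maximal-regularity results is required.
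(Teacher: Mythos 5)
Your route is essentially the paper's: Theorem \ref{thm:weakSrurfaceStokes} with $p=q=d=2$ and the identification $(X_{-1/2},X_{1/2})_{1/2,2}=L_{2,\sigma}(\Sigma,\mathsf{T}\Sigma)$ for the weak solution and \eqref{continuous-H1}; then the embedding into $(X_{-1/2},X_{1/2})_{1/2,r}=B^0_{2r,\sigma}$ and a second application with $(p,q)=(r,2)$, $r>2$, to get the strong $L_r$--$L_2$ regularization and the trace $u(t)\in B^{2-2/r}_{2r,\sigma}$; and finally the embedding into $B^{2\mu-2/p}_{qp,\sigma}$, \cite[Theorem 3.5]{PSW20} with initial value $u(t_0)$, uniqueness as in \cite[Theorem 3.4(c)]{MPS19}, and $t_0\downarrow 0$. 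The one place where your justification is off is the final embedding for $q\in(1,2)$: the Sobolev-number condition $2-2/r-d/2\ge 2\mu-2/p-d/q$, i.e.\ $\mu\le 1/2+1/p+1/q-1/r$ in $d=2$, is a valid criterion for $B^{2-2/r}_{2r,\sigma}\hookrightarrow B^{2\mu-2/p}_{qp,\sigma}$ only when the integrability increases ($q\ge 2$, $p\ge r$); for $q<2$ it would permit $2\mu-2/p>2-2/r$, i.e.\ a gain of smoothness while lowering integrability, which is false, so ``a valid $\mu$ always exists'' is argued from the wrong criterion there. The gap is easily closed: for $q<2$ require instead $2\mu-2/p\le 2-2/r$ (any $\mu$ slightly above $1/p$ works, which is all you need since the final statement is local in time and does not see the weight), or do as the paper does and restrict the embedding step to $q\ge 2$, deducing the case $q\in(1,2)$ afterwards from the $L_2$-result and the trivial embeddings $L_{2}\hookrightarrow L_{q}$, $H^2_2\hookrightarrow H^2_q$ on the compact surface.
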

\begin{proof}
According to the considerations preceding the Theorem, the assertions hold true for a fixed $q\ge 2$ and $p\in (2, \infty).$
The case $q\in (1,2)$ follows readily from the result in the $L_2$-case and the embedding
$L_2(\Sigma,\mathsf{T}\Sigma)\hookrightarrow L_q(\Sigma,\mathsf{T}\Sigma)$.
\end{proof}

\subsection{Energy estimates and global existence}\label{subsec:EnergyEst}

In \cite{PSW20} we showed that the set of equilibria $\mathcal E$ for~\eqref{NS-surface-introduction}, 
respectively~\eqref{Abstract-Navier-Stokes},
 consists exactly of the Killing vector fields on $\Sigma$, that is,
\begin{equation*} 
\mathcal E=\{u\in C^\infty(\Sigma,{\sf T}\Sigma)\mid \mathcal D_\Sigma(u)=0\}.
\end{equation*}
We recall that the condition $\mathcal D_\Sigma(u)=0$ implies that $u$ is divergence free (which follows from the
relation ${\rm div}_\Sigma\,u = {\sf tr}\,\mathcal D_\Sigma (u)$).
Moreover, one can show that any vector field $u\in H^1_q(\Sigma,{\sf T}\Sigma)$ satisfying $\mathcal D_\Sigma(u)=0$
is already smooth,  see for instance \cite[Lemma 3]{Pri94}.
Lastly, we recall that $\mathcal E$ is a finite dimensional vector space. If fact,
${\rm dim}\,\mathcal E\le d(d+1)/2$, with equal sign for the case where $\Sigma$ is isometric to a Euclidean sphere,
see for instance the remarks in Section 4.1 of~\cite{PSW20}.

Let us define the space
\begin{equation}
\label{V2}
V^j_2(\Sigma):=\{v\in H^j_{2,\sigma}(\Sigma,\mathsf{T}\Sigma) \mid (v|z)_\Sigma=0\ \text{for all}\  z\in\mathcal E\},\quad j\in\{0,1\}.
\end{equation}
Note that $V^j_2(\Sigma)$ is a closed subspace of $ H^j_{2,\sigma}(\Sigma,\mathsf{T}\Sigma)$, and hence is a Banach space.
Moreover, $H^j_{2,\sigma}(\Sigma, {\sf T}\Sigma)=\mathcal E \oplus V^j_2(\Sigma)$, see Remark \ref{rem:convergence}(a).

From now on we assume that $d=2$, and
we show that any solution of \eqref{eq:weakSurfaceNS} with initial value $v_0\in L_{2,\sigma}(\Sigma)$ being  orthogonal to  $\mathcal E$ will remain orthogonal
for all later times. Moreover, we establish an energy estimate for such solutions.
\begin{proposition}
\label{pro:solutions-orthogonal}
Let $d=2$. Suppose $v_0\in V^0_2(\Sigma)$ and
let $v$ be the solution of \eqref{eq:weakSurfaceNS} established in Theorem~\ref{weak-strong-L2}.
Then
\begin{itemize}
\vspace{2mm}
\item[(a)]
 $v(t)\in V^0_2(\Sigma)$ for $t\in [0,t^+(v_0))$ and $v(t)\in V^1_2(\Sigma)$  for $t\in (0, t^+(v_0)).$
\vspace{2mm}
\item[(b)] There exists a universal constant $M>0$ such that
\begin{equation*} 
|v(t)|^2_{L_2(\Sigma)}+ \int_0^t |v(s)|^2_{H^1_2(\Sigma)}\, ds \le M |v_0|^2_{L_2(\Sigma)}, \quad t\in (0,t^+(v_0)).
\end{equation*}
\end{itemize}
\end{proposition}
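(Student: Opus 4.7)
The plan is to prove both parts by testing the weak formulation \eqref{eq:weakSurfaceNS} against well-chosen vector fields and exploiting two algebraic facts about Killing fields: $\mathcal{D}_\Sigma(z)=0$ by definition of $\mathcal{E}$, and the Killing identity $(\nabla_X z|Y)+(X|\nabla_Y z)=0$ for all tangent fields $X,Y$, which in particular yields $(\nabla_v z|v)=0$ pointwise. The continuity $v\in C([0,t^+);L_{2,\sigma}(\Sigma,\mathsf{T}\Sigma))$ from Theorem~\ref{weak-strong-L2} together with the instantaneous regularization into a strong $L_p$-$L_q$ solution on $(0,t^+)$ will justify all formal computations.

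For part (a), fix $z\in\mathcal{E}$. Since $\mathcal{E}$ consists of smooth vector fields, $z\in X_{1/2}^\sharp=H^1_{q',\sigma}(\Sigma,\mathsf{T}\Sigma)$ for every $q'\in(1,\infty)$, so $z$ is an admissible test function. Pairing \eqref{eq:weakSurfaceNS} with $z$ gives
\begin{equation*}
\frac{d}{dt}(v|z)_\Sigma+\langle A_{S,\Sigma}^{\sf w}v,z\rangle=\langle F_\Sigma^{\sf w}(v),z\rangle.
\end{equation*}
By the representation formula for $A_{S,\Sigma}^{\sf w}$ and $\mathcal{D}_\Sigma(z)=0$, the second term vanishes; by the Killing identity, $\langle F_\Sigma^{\sf w}(v),z\rangle=(\nabla_v z|v)_\Sigma=0$. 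Therefore $t\mapsto(v(t)|z)_\Sigma$ is constant, and using $v\in C([0,t^+);L_{2,\sigma}(\Sigma,\mathsf{T}\Sigma))$ together with $(v_0|z)_\Sigma=0$, we conclude $(v(t)|z)_\Sigma=0$ for every $t\in[0,t^+)$ and every $z\in\mathcal{E}$. This gives $v(t)\in V^0_2(\Sigma)$ on $[0,t^+)$; combining with $v(t)\in H^1_{2,\sigma}(\Sigma,\mathsf{T}\Sigma)$ for $t\in(0,t^+)$ (from the regularization part of Theorem~\ref{weak-strong-L2}) yields $v(t)\in V^1_2(\Sigma)$ on $(0,t^+)$.

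For part (b), test \eqref{eq:weakSurfaceNS} against $v$ itself, which is permissible since $v\in L_2((0,a);H^1_{2,\sigma}(\Sigma,\mathsf{T}\Sigma))$. The Stokes term contributes $2\mu_s|\mathcal{D}_\Sigma(v)|^2_{L_2(\Sigma)}$, while the convective term vanishes: by the metric-connection identity \eqref{metric-connection} and the surface divergence theorem,
\begin{equation*}
\langle F_\Sigma^{\sf w}(v),v\rangle=(\nabla_v v|v)_\Sigma=\tfrac{1}{2}\int_\Sigma\nabla_v|v|^2\,d\Sigma=-\tfrac{1}{2}\int_\Sigma|v|^2\,\mathrm{div}_\Sigma v\,d\Sigma=0.
\end{equation*}
This yields the energy identity $\tfrac{1}{2}\tfrac{d}{dt}|v(t)|^2_{L_2(\Sigma)}+2\mu_s|\mathcal{D}_\Sigma(v(t))|^2_{L_2(\Sigma)}=0$. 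Since $v(t)\in V^1_2(\Sigma)$ by part (a), Korn's inequality (Theorem~\ref{thm:Korn}) provides a constant $c>0$ with $|\mathcal{D}_\Sigma(v(t))|^2_{L_2(\Sigma)}\ge c|v(t)|^2_{H^1_2(\Sigma)}$. Integration in time gives the claimed estimate with $M=\max\{1,1/(4\mu_s c)\}$.

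The main obstacle is making these formal computations rigorous, because $v$ is only a distributional solution in $X_{-1/2}=H^{-1}_{2,\sigma}(\Sigma,\mathsf{T}\Sigma)$ a priori. The cleanest route is to exploit the instantaneous regularization from Theorem~\ref{weak-strong-L2}: on every subinterval $[\tau,t]\subset(0,t^+)$ the solution is a strong $L_p$-$L_q$ solution, so the identities above hold in the classical sense; passing $\tau\searrow 0$ using the $L_2$-continuity at $t=0$ transfers part (a) and the energy identity down to the initial time. One must also check that $(\nabla_v z|v)_\Sigma$ and $(\nabla_v v|v)_\Sigma$ are well defined under the available integrability; for $d=2$ the Sobolev embedding $H^1_2(\Sigma)\hookrightarrow L_s(\Sigma)$ for every $s<\infty$ together with smoothness of $z$ ensures this for a.e.\ $t\in(0,t^+)$.
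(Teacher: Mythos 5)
Your proof is correct and follows essentially the same route as the paper: pair the equation with a Killing field $z$ (respectively with $v$ itself), use $\mathcal D_\Sigma(z)=0$ together with the Killing identity and the surface divergence theorem to kill the Stokes and convective terms, then invoke Korn's inequality via part (a) and integrate, with rigor supplied by the instantaneous regularization and $L_2$-continuity at $t=0$. The only cosmetic difference is that you use the bilinear-form representation of $A^{\sf w}_{S,\Sigma}$ where the paper cites symmetry of $A_{S,\Sigma}$ and $N(A_{S,\Sigma})=\mathcal E$; these are interchangeable.
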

\begin{proof}
(a)
According to Theorem~\ref{weak-strong-L2}, we know that
$$
v\in H^1_{p,{\rm loc}}((0, t^+); L_{2,\sigma}(\Sigma,\mathsf{T}\Sigma))\cap L_{p,{\rm loc}}((0, t^+) ; H^2_{2,\sigma}(\Sigma,\mathsf{T}\Sigma))
$$
for $p>2$. Pick any $z\in \mathcal E$. Then
\begin{equation*}
\frac{d}{dt} (v(t) | z)_\Sigma
=- (\nabla_{v(t)}v(t) | z)_\Sigma - (A_{S,\Sigma} v(t) | z )_\Sigma
=  -(\nabla_{v(t)}v(t) | z)_\Sigma,
\end{equation*}
where the time derivative exists for almost all $t\in (0, t^+(v_0))$.
For the last equal sign we employed the property that $A_{S,\Sigma}$ is symmetric on $L_{2,\sigma}(\Sigma)$ and $N(A_{S,\Sigma})=\mathcal E$,
see \cite[Proposition 4.1]{PSW20}.
In a next step we show that
\begin{equation*}
(\nabla_v v | z )_\Sigma =0\quad\text{for all}\quad  v\in H^1_{2,\sigma}(\Sigma).
\end{equation*}
Indeed, this follows from
\begin{equation*}
\begin{aligned}
 (\nabla_v v | z)_\Sigma &= \int_\Sigma (\nabla_v v | z)\,d\Sigma =   \int_\Sigma \big(\nabla_v  (v | z)- (v| \nabla_v z)\big)\,d\Sigma \\
 &= \int_\Sigma \Big( (v| {\rm grad}(v | z))- (v| \nabla_v z)\Big)\,d\Sigma
    = 0,
 \end{aligned}
\end{equation*}
where we used \eqref{metric-connection}, the surface divergence theorem and the property that $z$ is a Killing vector field,
(which implies $(\nabla_v z| v) + (v | \nabla_v z)=0)$.
Hence, we have shown that
 $\frac{d}{dt} ( v(t)|z )_\Sigma=0$ for almost all $t\in (0, t^+(v_0))$.
\eqref{continuous-H1} now implies
$$(v(t)| z)_\Sigma =(v_0 | z)_\Sigma =0\quad \text{for all}\quad t\in [0,t^+(v_0)).$$
(b) Similarly as in part (a), one shows (suppressing the variable $t$) that
\begin{equation*}
\begin{aligned}
\frac{d}{dt} \frac{1}{2} |v(t)|^2_{L_2(\Sigma)}
&=-\int_{\Sigma} \big( (\nabla_v v|v) + (A_{S,\Sigma} v |v)\big)\,d\Sigma
  =   -2\mu_s\int_{\Sigma} |\mathcal D_\Sigma (v)|^2\,d\Sigma.
\end{aligned}
\end{equation*}
The assertion in part (a) and Korn's inequality~\eqref{Korn} readily imply
\begin{equation}
\label{energy-inequality}
\frac{d}{dt}  |v(t)|^2_{L_2(\Sigma)} + \alpha\, |v(t)|^2_{H^1_2(\Sigma)}\le 0,\quad t\in (0, t^+(v_0)),
\end{equation}
with an appropriate constant $\alpha>0$.  
Integration yields the assertion in (b), as $|v|^2_{L_2(\Sigma)}$ is absolutely continuous on $[0,t^+(v_0))$.
\end{proof}

\begin{proposition}
\label{pro:global-existence-v}
Suppose that $d=2$ and  $v_0\in V^0_2(\Sigma)$.

\smallskip
Then  problem \eqref{eq:weakSurfaceNS} admits a unique global solution $v$ enjoying the regularity properties stated in Theorem~\eqref{weak-strong-L2},
with $t^+(v_0)=\infty$.

Moreover, there exists a constant $\alpha>0$ such that
\begin{equation}
\label{exponential-decay}
|v(t)|_{L_2(\Sigma)} \le e^{-\alpha t} |v_0|_{L_2(\Sigma)},  \quad  t\ge 0.
\end{equation}
\end{proposition}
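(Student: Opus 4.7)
The plan is to first establish the exponential decay estimate, which is essentially immediate from the energy inequality \eqref{energy-inequality} just proved in Proposition~\ref{pro:solutions-orthogonal}(b), and then obtain global existence by combining this a priori bound with the continuation criterion for solutions in critical spaces.

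For the exponential decay, I would simply note that on $V^1_2(\Sigma)$ the trivial bound $|v|_{H^1_2}^2\ge |v|_{L_2}^2$ holds, so \eqref{energy-inequality} implies
\begin{equation*}
\frac{d}{dt}|v(t)|^2_{L_2(\Sigma)}+\alpha\,|v(t)|^2_{L_2(\Sigma)}\le 0,\qquad t\in (0,t^+(v_0)).
\end{equation*}
Since $|v(\cdot)|^2_{L_2(\Sigma)}$ is absolutely continuous on $[0,t^+(v_0))$, Gronwall yields $|v(t)|^2_{L_2(\Sigma)}\le e^{-\alpha t}|v_0|^2_{L_2(\Sigma)}$, which is \eqref{exponential-decay} after renaming the constant~$\alpha$.

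For global existence I would argue by contradiction. Assume $t^+(v_0)<\infty$. The critical trace space for the weak problem with $d=p=q=2$ is $X^{\sf w}_{\gamma,\mu_c}=L_{2,\sigma}(\Sigma,\mathsf{T}\Sigma)$, and the blow-up criterion from \cite[Theorem 1.2]{PSW18} (applied as in the proof of Theorem~\ref{weak-strong-L2}) states that if $t^+(v_0)<\infty$ then $v(t)$ cannot have a limit in $L_{2,\sigma}(\Sigma,\mathsf{T}\Sigma)$ as $t\uparrow t^+(v_0)$, for otherwise one could restart from that limit by Theorem~\ref{weak-strong-L2} and violate maximality. I would then derive a contradiction by exhibiting such a limit: by Proposition~\ref{pro:solutions-orthogonal}(b) the solution satisfies $v\in L_\infty(0,t^+;L_{2,\sigma})\cap L_2(0,t^+;H^1_{2,\sigma})$, and using the weak equation $\partial_t v=-A^{\sf w}_{S,\Sigma}v+F^{\sf w}_\Sigma(v)$, the two-dimensional Sobolev embedding $H^1(\Sigma)\hookrightarrow L_4(\Sigma)$ gives
\begin{equation*}
|\langle F^{\sf w}_\Sigma(v),\phi\rangle|\le |v|^2_{L_4(\Sigma)}|\phi|_{H^1_2(\Sigma)}\le C|v|_{L_2(\Sigma)}|v|_{H^1_2(\Sigma)}|\phi|_{H^1_2(\Sigma)},
\end{equation*}
so that $F^{\sf w}_\Sigma(v)\in L_2(0,t^+;H^{-1}_{2,\sigma})$, and the linear part trivially satisfies $A^{\sf w}_{S,\Sigma}v\in L_2(0,t^+;H^{-1}_{2,\sigma})$. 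Hence $v\in H^1_2(0,t^+;H^{-1}_{2,\sigma})\cap L_2(0,t^+;H^1_{2,\sigma})$, which embeds into $C([0,t^+];L_{2,\sigma})$ by the standard Lions-Magenes interpolation. This produces the required limit at $t^+(v_0)$ and contradicts the blow-up criterion, forcing $t^+(v_0)=\infty$.

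The step I expect to be the main obstacle is invoking the continuation criterion correctly in the critical-space framework of \cite{PSW18}: one must rule out a finite blow-up time not by bounding the critical norm (mere boundedness is insufficient in critical spaces) but by genuinely exhibiting a continuous extension to $t^+(v_0)$ in $L_{2,\sigma}(\Sigma,\mathsf{T}\Sigma)$. The combination of the energy identity from Proposition~\ref{pro:solutions-orthogonal}(b), the two-dimensional $H^1\hookrightarrow L_4$ estimate on the convective term, and the Lions-Magenes-type embedding is precisely what is needed to provide this extension; once it is in place the global decay \eqref{exponential-decay} follows immediately from the first step applied on all of $[0,\infty)$.
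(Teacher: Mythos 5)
Your argument is correct, and the decay part is exactly the paper's: combine the energy inequality \eqref{energy-inequality} with $|v|_{H^1_2}\ge|v|_{L_2}$ and integrate. For global existence, however, you take a genuinely different (more hands-on) route. The paper simply quotes the abstract continuation result \cite[Theorem 2.4]{PSW18}, whose blow-up criterion at the critical weight reads: $t_+<\infty$ implies $v\notin L_p((0,t_+);[X_0^{\sf w},X_1^{\sf w}]_{\mu_c})$; since $p=q=d=2$ gives $\mu_c=1$, this space is $L_2((0,t_+);H^1_{2,\sigma}(\Sigma,{\sf T}\Sigma))$, and the energy estimate of Proposition~\ref{pro:solutions-orthogonal}(b) is literally the negation of that condition, so global existence is immediate. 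You instead re-derive a continuation criterion by hand: using $v\in L_\infty(0,t_+;L_2)\cap L_2(0,t_+;H^1_2)$, the two-dimensional Ladyzhenskaya-type bound $|v|_{L_4}^2\le C|v|_{L_2}|v|_{H^1_2}$ on the convective term, and the trace embedding $H^1_2(0,t_+;H^{-1}_{2,\sigma})\cap L_2(0,t_+;H^1_{2,\sigma})\hookrightarrow C([0,t_+];L_{2,\sigma})$, you produce a limit in the critical trace space and restart. This works, but two remarks are in order: the criterion you attribute to \cite[Theorem 1.2]{PSW18} is not stated there (the continuation criteria are in Theorem 2.4, and the one the paper uses is the $L_p$-in-time condition, not nonexistence of a limit), so your restart argument must indeed be carried out explicitly as you sketch it; and it succeeds here precisely because $\mu_c=1$, so the solution class is unweighted, its trace space is $(H^{-1}_{2,\sigma},H^1_{2,\sigma})_{1/2,2}=L_{2,\sigma}$, and the concatenation of $v$ with the restarted solution lies again in the maximal regularity class, contradicting maximality. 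What your approach buys is an elementary, Leray--Hopf-style self-contained continuation argument; what the paper's buys is brevity, since the abstract critical-space result already encodes exactly the Serrin-type condition matched by the energy bound.
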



\bigskip
\begin{proof}
By the abstract result \cite[Theorem 2.4]{PSW18} on global-in-time existence, the maximal time of existence $t_+(u_0)$ satisfies the following property:
$$t_+(u_0)<\infty \Longrightarrow u\notin L_p((0,t_+);[X_0^{\sf w},X_1^{\sf w}]_{\mu_c}).$$
Observe that in case $p=q=d=2$, it holds $\mu_c=1/p+d/(2q)=1$, hence if
$$u\in L_2((0,t_+);H_2^1(\Sigma,\mathsf{T}\Sigma)),$$
then the weak solution exists globally in time.
Proposition~\ref{pro:solutions-orthogonal} guarantees that any solution $v$ with initial value $v_0\in V^0_2(\Sigma)$
satisfies
\begin{equation*}
v\in L_2((0,t^+(v_0)), H^1_2(\Sigma,{\sf T}\Sigma))
\end{equation*}
and, hence, global existence of the weak solution follows.
Since we know that the weak solution in this case regularizes to a strong solution, we obtain global in time existence of strong solutions for $q=d=2$ as well.

\medskip
Finally, we conclude from~\eqref{energy-inequality} that
$ \frac{d}{dt}  |v(t)|^2_{L_2(\Sigma)} + \alpha\, |v(t)|^2_{L_2(\Sigma)}\le 0$ for any $t>0$ 
and this implies the estimate in~\eqref{exponential-decay}.

\end{proof}
\begin{remark}
\label{pro:v-linearized}
Suppose $u_*\in \mathcal E$ and $v_0\in V^0_2(\Sigma)$. Then
 the assertions of Theorem~\ref{thm:strong-solution}, Theorem~\ref{weak-strong-L2} as well as Propositions~\ref{pro:solutions-orthogonal} 
 and~\ref{pro:global-existence-v} hold true for solutions of 
  \begin{equation}
  \label{v-linearized-strong}
   \partial_t v + A_{S,\Sigma}v = -P_{H,\Sigma}(\nabla_v v + \nabla_{u_*}v + \nabla_v u_*),\quad v(0)=v_0,
 \end{equation}
 respectively its weak formulation
\begin{equation}
 \label{v-linearized-weak}
   \partial_t v + A^{\sf w}_{S,\Sigma}v = F^{\sf w}_\Sigma(v),  \quad v(0)=v_0,
 \end{equation}
 where $\la F^{\sf w}_\Sigma (v),\phi \ra = ( v | \nabla_v \phi)_\Sigma + (v| \nabla_{u_*}\phi)_\Sigma + (u_* | \nabla_v \phi)_\Sigma $ 
 for $\phi\in H^1_{2,\sigma}(\Sigma,{\sf T}\Sigma)$. 
 
 \medskip
 In particular, each solution of\eqref{v-linearized-weak} with initial value $v_0\in V^0_2(\Sigma)$ exists globally and there
 exists a  positive constant $\alpha$ such that
\begin{equation*} 
|v(t)|_{L_2(\Sigma)} \le e^{-\alpha t} |v_0|_{L_2(\Sigma)},  \quad  t\ge 0.
\end{equation*}
\end{remark}
\begin{proof}
One readily verifies that the assertions of Theorems~\ref{thm:strong-solution} and~\ref{weak-strong-L2} remain valid for
problem~\eqref{v-linearized-strong} and~\eqref{v-linearized-weak}, respectively.
In fact, one only needs to verify that the terms on the right hand side can be estimated in the same way
as in the proof of Theorems~\ref{thm:strong-solution} and~\ref{weak-strong-L2}.

Next we show that $v(t)\in V^0_2(\Sigma)$ for $t\in [0,t^+(v_0))$.
Let $z\in \mathcal E$. Following the proof of Proposition~\ref{pro:solutions-orthogonal}(a), we obtain
\begin{equation*}
\frac{d}{dt} (v(t) | z)_\Sigma
=  -(\nabla_{v(t)}v(t)+\nabla_{u_*}v(t) + \nabla_{v(t)}u_* | z)_\Sigma, \quad t\in (0,t^+(v_0)).
\end{equation*}
According to the proof of Proposition~\ref{pro:solutions-orthogonal}(a), $(\nabla_v v|z)=0$ 
and it remains to show that
$(\nabla_{u_*}v+ \nabla_{v}u_* | z)_\Sigma=0$ for any $v\in H^1_{2,\sigma}(\Sigma, {\sf T}\Sigma).$
This follows from 
\begin{equation*}
\begin{aligned}
 (\nabla_{u_*} v + \nabla_v u_*| z)_\Sigma &=  \int_\Sigma \big(\nabla_{u_*} (v | z)+\nabla_v(u_*|z)- (u_*| \nabla_v z)-(v| \nabla_{u_*} z)\big)=0
 \end{aligned}
\end{equation*}
where we used \eqref{metric-connection}, the surface divergence theorem and the property that $z$ is a Killing vector field.
The same arguments as in the proof of Propositions~\ref{pro:solutions-orthogonal} and~\ref{pro:global-existence-v}
yield the remaining assertions. 
\end{proof}
\begin{theorem}[Global existence]
\label{thm:convergence}
Suppose $d=2$.

Then any solution of \eqref{eq:weakSurfaceNS} with initial value $u_0\in L_{2,\sigma}(\Sigma,{\sf T}\Sigma)$ exists globally, has
the regularity properties listed in Theorem~\ref{weak-strong-L2},
 and converges at an exponential rate
to the equilibrium 
$u_* = P_{\cE} u_0$ in the topology of  $H_q^{2}(\Sigma,\mathsf{T}\Sigma)$ for any fixed  $q\in  (1,\infty)$,
where $P_{\cE}$ is the orthogonal projection of $u_0$ onto $\cE$ with respect to the $L_2(\Sigma, \mathsf{T}\Sigma)$ inner product.
\end{theorem}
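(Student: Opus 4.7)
The plan is to reduce the statement to Remark~\ref{pro:v-linearized} via a decomposition of the initial value. Given $u_0\in L_{2,\sigma}(\Sigma,\mathsf{T}\Sigma)$, set $u_*:=P_{\mathcal E} u_0 \in \mathcal E$ and $v_0:=u_0-u_*$; by the $L_2$-orthogonal splitting $L_{2,\sigma}(\Sigma,\mathsf{T}\Sigma)=\mathcal E\oplus V^0_2(\Sigma)$, we have $v_0\in V^0_2(\Sigma)$. Since every Killing field is a stationary solution of \eqref{NS-surface-introduction}, the substitution $v:=u-u_*$ transforms the weak surface Navier--Stokes equation \eqref{eq:weakSurfaceNS} into the linearized system \eqref{v-linearized-weak} with initial value $v_0$. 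By Remark~\ref{pro:v-linearized}, $v$ exists globally, remains in $V^0_2(\Sigma)$ for all $t\ge 0$, has the regularity listed in Theorem~\ref{weak-strong-L2}, and satisfies
\[
|v(t)|_{L_2(\Sigma)}\le e^{-\alpha t}|v_0|_{L_2(\Sigma)},\quad t\ge 0,
\]
for some $\alpha>0$. This immediately yields global existence of $u=u_*+v$ with the stated regularity and exponential convergence of $u(t)$ to $u_*$ in the $L_2$-topology; since $u(t)\to u_*$ in $L_2$, if $u(t)$ converges to any limit in a stronger norm, that limit must be $u_*=P_{\mathcal E} u_0$.

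To upgrade the decay to the $H^2_q(\Sigma,\mathsf{T}\Sigma)$-topology for a given $q\in(1,\infty)$, I would combine the $L_2$-decay with parabolic smoothing via the maximal $L_p$-regularity of $\omega+A_{S,\Sigma}$ furnished by Theorem~\ref{thm:Hinftycalc}. Fix $p>2$ and, for each large $T$, work on the time window $J_T:=[T,T+1]$. Applying maximal regularity on $J_T$ to the shifted form of \eqref{v-linearized-strong} gives a bound of the shape
\[
|v|_{L_p(J_T;H^2_{q}(\Sigma))}+|v|_{H^1_p(J_T;L_q(\Sigma))} \le C\bigl(|v(T)|_{B^{2-2/p}_{qp,\sigma}(\Sigma)}+|G(v)|_{L_p(J_T;L_q(\Sigma))}\bigr),
\]
where $G(v):=\omega v-P_{H,\Sigma}(\nabla_v v+\nabla_{u_*}v+\nabla_v u_*)$. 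In dimension $d=2$, Sobolev embedding ($H^1_q\hookrightarrow L_{2q}$ and $H^{2-\varepsilon}_q\hookrightarrow L_\infty$ for $p,q$ sufficiently large) together with interpolation between the exponential $L_2$-decay and the uniform-in-$T$ smoothing bound from Theorem~\ref{weak-strong-L2} control both the trace norm of $v(T)$ and the $L_p(J_T;L_q)$-norm of $G(v)$ by $C e^{-\alpha' T}|v_0|_{L_2(\Sigma)}$ for some $\alpha'>0$. The quadratic term $\nabla_v v$ is absorbed into the left-hand side using the smallness of $|v|_{L_p(J_T;H^2_q)}$ coming from the previous iteration. An embedding of the maximal regularity class into $C(J_T; B^{2-2/p}_{qp,\sigma})$, followed by a bootstrap that differentiates \eqref{v-linearized-strong} in time to gain further smoothing, finally propagates the exponential decay to $|u(t)-u_*|_{H^2_q(\Sigma)}\le C e^{-\alpha'' t}$ pointwise in $t$.

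The main obstacle is this bootstrap from $L_2$-exponential decay to $H^2_q$-exponential decay in a manner that keeps all constants uniform in $T$: the maximal regularity estimate must be applied on intervals of fixed length so that the maximal regularity constants do not blow up, the trace term $|v(T)|_{B^{2-2/p}_{qp,\sigma}}$ must be controlled by an interpolation between $L_2$-decay and a uniform smoothing bound, and the quadratic nonlinearity $\nabla_v v$ must be absorbed using the quantitative smallness of $v$ for large $T$ supplied by Remark~\ref{pro:v-linearized}. Once these technical points are handled, the exponential rate obtained in the higher norm is (possibly a fraction of) the rate $\alpha$ coming from Korn's inequality and the energy estimate~\eqref{energy-inequality}.
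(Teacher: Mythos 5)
The first half of your argument --- decomposing $u_0=u_*+v_0$ with $u_*=P_\cE u_0$, passing to the system \eqref{v-linearized-weak} for $v=u-u_*$, and invoking Remark~\ref{pro:v-linearized} to get global existence, the regularity of Theorem~\ref{weak-strong-L2}, and exponential $L_2$-decay of $v$ --- is exactly the paper's first step, and it is correct. The gap lies in your upgrade to convergence in $H^2_q(\Sigma,\mathsf{T}\Sigma)$. You start the windowed maximal-regularity estimate on $J_T=[T,T+1]$ from a bound on $|v(T)|_{B^{2-2/p}_{qp,\sigma}(\Sigma)}$ which you propose to obtain by ``interpolation between the exponential $L_2$-decay and the uniform-in-$T$ smoothing bound from Theorem~\ref{weak-strong-L2}''; but Theorem~\ref{weak-strong-L2} only provides $H^1_{p,{\rm loc}}\cap L_{p,{\rm loc}}$ regularity on $(0,t^+)$, with no uniformity as $T\to\infty$. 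A uniform-in-$T$ higher-order bound is precisely what your bootstrap is supposed to produce, so as written the argument is circular; it would have to be replaced by an induction over the windows, with the constants tracked carefully so that the quadratic term can indeed be absorbed. Moreover, even granting such bounds, your maximal-regularity class on $J_T$ embeds only into $C(J_T;B^{2-2/p}_{qp,\sigma}(\Sigma))$, not into $C(J_T;H^2_q(\Sigma))$, so pointwise-in-time exponential decay in $H^2_q$ does not follow from the displayed estimate; the ``differentiate the equation in time'' step that is meant to bridge this is only named, and it is exactly the delicate part of the argument.

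For comparison, the paper avoids both difficulties by using the Lipschitz continuous dependence on initial data near the equilibrium furnished by \cite[Theorem 1.2]{PSW18}: since $u(\cdot,u_*)\equiv u_*$, one gets, on a time interval of fixed length, an estimate of $|u(t_{j+1},u_j)-u_*|$ in a stronger norm by $C|u_j-u_*|$ in a weaker norm, and this is applied only finitely many times on successively stronger scales --- the weak $L_2$-based scale, the strong $L_q$-based scale, and finally the shifted scale $(X_{1/2},X_{1+1/2})=(H^1_{s,\sigma},H^3_{s,\sigma})$, whose trace space $B^{3-2/r}_{sr,\sigma}(\Sigma,\mathsf{T}\Sigma)$ embeds into $H^2_q(\Sigma,\mathsf{T}\Sigma)$. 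Smallness enters only through choosing the starting time $t_1$ large (using the $L_2$-decay), there is no absorption of the nonlinearity, and the finitely many fixed-length steps make all constants harmless. If you wish to salvage your route, you should either set up the window induction rigorously and then perform an additional smoothing step in a higher base space (for instance maximal regularity in the $H^1_s$-based scale, as the paper does), or genuinely carry out the time-differentiation argument; in its present form the passage from $L_2$-decay to $H^2_q$-decay is not proved.
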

\begin{proof}
Let  $u_0\in L_{2,\sigma}(\Sigma,{\sf T}\Sigma)$ be given. Then there exist  unique elements
 $u_*\in\mathcal E$ and  $v_0\in V^0_2(\Sigma)$ such that
 $u_0=u_*+v_0 $.
Let $v$ be the unique (global) solution of problem~~\eqref{v-linearized-weak}, respectively~\eqref{v-linearized-strong},
 whose existence has been asserted in Remark~\ref{pro:v-linearized}
Then
$$u(t):=u_* + v(t),\quad t>0,$$
yields a (unique) global solution of \eqref{Abstract-Navier-Stokes}, respectively \eqref{eq:weakSurfaceNS}, with initial value $u_0$.
For this, we just need to observe that
\begin{equation*}
\partial_t u_* + A_{S,\Sigma} u_* =  - P_{H,\Sigma} \nabla_{u_*} u_*.
\end{equation*}
Indeed, this follows from  the relations $N(A_{S,\Sigma})=\mathcal E$ and $\nabla_{u_*}u_*=\frac{1}{2} {\rm grad}(u_* | u_*)$, with the latter assertion
implying $P_{H,\Sigma} \nabla_{u_*} u_*=0$.
It follows readily that $u(t)\to u_*$ as $t\to\infty$ at an exponential rate in $L_2(\Sigma,\mathsf{T}\Sigma)$. 
To prove convergence in the stronger topology $H_q^{2}(\Sigma,\mathsf{T}\Sigma)$, we proceed as follows. 

\medskip\noindent
{\bf (i)}
First, we note that $L_2(\Sigma,\mathsf{T}\Sigma)\hookrightarrow B_{2r}^0(\Sigma,\mathsf{T}\Sigma)$ for any fixed $r\ge 2$. 
For fixed, but arbitrary $t_1>0$, we solve \eqref{eq:weakSurfaceNS} with initial value $u_1:=u(t_1)\in B_{2r,\sigma}^0(\Sigma,\mathsf{T}\Sigma)$. 
Choosing $\mu_c=1/r+1/2$,
it follows from \cite[Theorem 1.2]{PSW18} that there exist positive numbers $\tau=\tau(u_*)$, $\varepsilon=\varepsilon(u_*)$ and $C_1=C_1(u_*)$ such that
\begin{equation}
\label{E1}
|u(\cdot, \hat u_1) - u(\cdot, u_*)|_{\EE_{1,\mu_c}(0,2\tau)}\le C_1 |\hat u_1-u_*|_{B_{2r,\sigma}^0(\Sigma)}
\end{equation}
for all $\hat u_1\in B_{2r,\sigma}^0(\Sigma,\mathsf{T}\Sigma)$ with $|\hat u_1-u_*|_{B_{2r,\sigma}^0(\Sigma)}<\varepsilon,$
where  
\begin{equation*}
\EE_{1,\mu_c}(0,2\tau) 
=H_{r,\mu_c}^1((0,2\tau);H_{2,\sigma}^{-1}(\Sigma,\mathsf{T}\Sigma))\cap L_{r,\mu_c}((0,2\tau);H_{2,\sigma}^{1}(\Sigma,\mathsf{T}\Sigma)).
\end{equation*}
It should be observed here that $u(t, u_*)=u_*$, as $u_*$ is an equilibrium.

\smallskip
\noindent
By following the arguments in \cite[page 228]{PrSi16}
and employing 
$$X^{\sf w}_\gamma= (X^{\sf w}_0, X^{\sf w}_1)_{1-1/r,r}= (H^{-1}_{2,\sigma}(\Sigma,{\sf T}\Sigma), H^{1}_{2,\sigma}(\Sigma,{\sf T}\Sigma))_{1-1/r, r}=B^{1-2/r}_{2r,\sigma}(\Sigma, {\sf T}\Sigma),$$
see~\eqref{interpolation}, 
we conclude with~\eqref{E1} that there is a constant $C_2=C_2(u_*)$ such that
\begin{equation*} 
|u(\cdot, \hat u_1) - u(\cdot, u_*)|_{C([\tau,2\tau], B^{1-2/r}_{2r,\sigma}(\Sigma))}\le  C_2 |\hat u_1 -u_*|_{B_{2r}^0(\Sigma)}.
\end{equation*}
Letting $C_3$ be the embedding constant of $L_2(\Sigma,{\sf T}\Sigma)\hookrightarrow B^0_{2r}(\Sigma,{\sf T}\Sigma)$ 
we choose $t_1$ large enough such that $|u(t_1)-u_*|_{L_2(\Sigma)}<\varepsilon/C_3$. 
Setting $t_2=\tau$, we may deduce from the estimates above that 
\begin{equation}
\label{eq:conv1}
|u(t_2,u_1)-u_*|_{B_{2r}^{1-2/r}(\Sigma)}\le C|u_1-u_*|_{L_{2}(\Sigma)},
\end{equation}
with $C=C(u_*)=C_2C_3 $.
Here we have also used the uniqueness of solutions, cf. \cite[Theorem 3.4(c)]{MPS19}.

\medskip
\noindent
{\bf (ii)}
For fixed, but arbitrary $r>2$, we now choose a weight $\mu\in (\frac{1}{r},\frac{1}{2}]$ so that
$$B_{2r}^{1-2/r}(\Sigma,\mathsf{T}\Sigma)\hookrightarrow B_{2r}^{2\mu-2/r}(\Sigma,\mathsf{T}\Sigma).$$
Solving \eqref{Abstract-Navier-Stokes} with initial value $u_2:=u(t_2,u_1)\in B_{2r,\sigma}^{2\mu-2/r}(\Sigma,\mathsf{T}\Sigma)$ and repeating
the above procedure in the `strong' spaces $(X_0,X_1)$, we obtain the estimate
\begin{equation}
\label{eq:conv2}
|u(t_3,u_2)-u_*|_{B_{2r}^{2-2/r}(\Sigma)}\le C|u_2-u_*|_{B_{2r}^{1-2/r}(\Sigma)},
\end{equation}
for some $t_3=t_3(u_*)>0$.

\medskip\noindent
{\bf (iii)} Next, we use the Sobolev embedding 
$$B_{2r}^{2-2/r}(\Sigma,\mathsf{T}\Sigma)\hookrightarrow B_{sr}^{2\mu-2/r}(\Sigma,\mathsf{T}\Sigma),$$
valid for $s\ge 2$ and $\mu\in (\frac{1}{r},\frac{1}{2}+\frac{1}{s}]$. 
Choosing $t_1>0$ from above sufficiently large, we infer from~\eqref{eq:conv2} that $u(t_3,u_2)$ is close to $u_*$ in the topology of $B_{sr}^{2\mu-2/r}(\Sigma,\mathsf{T}\Sigma)$.
Solving \eqref{Abstract-Navier-Stokes} with initial value $u_3:=u(t_3,u_2)\in B_{sr,\sigma}^{2\mu-2/r}(\Sigma,\mathsf{T}\Sigma)$ and repeating
the above procedure  in the `strong' spaces $(X_0,X_1)$, we obtain the estimate
\begin{equation}
\label{eq:conv3}
|u(t_4,u_3)-u_*|_{B_{sr}^{2-2/r}(\Sigma)}\le C|u_3-u_*|_{B_{2r}^{2-2/r}(\Sigma)},
\end{equation}
for some $t_4=t_4(u_*)>0$.

\medskip\noindent
{\bf (iv)} We will now consider \eqref{Abstract-Navier-Stokes} in the spaces
$$(X_{1/2}, X_{1+1/2})= (H^1_{s,\sigma}(\Sigma, {\sf T} \Sigma), H^3_{s,\sigma}(\Sigma, {\sf T} \Sigma)),$$
where $(X_\alpha, A_\alpha)$ is the interpolation-extrapolation scale with respect to the complex 
interpolation functor, based
on $X_0=L_{s,\sigma}(\Sigma, {\sf T}\Sigma)$, $s\in (1,\infty)$, introduced at the beginning of Section 4.2.
We note that $A_{1/2}$, the realization of $A_0=\omega + A_{S,\Sigma}$ in $X_{1/2}$, has exactly the same properties as $A_0$.
 For $\beta=1/2$ we obtain 
\begin{equation*}
[X_{1/2}, X_{1+1/2}]_{\beta}=[H^1_{s,\sigma}(\Sigma, {\sf T} \Sigma), H^3_{s,\sigma}(\Sigma, {\sf T} \Sigma)]_{1/2}=H^2_{s,\sigma}(\Sigma, {\sf T} \Sigma).
\end{equation*}
It is easy to see that the nonlinearity 
$$
F_\Sigma: H^2_{s,\sigma}(\Sigma, {\sf T}\Sigma)\times H^2_{s,\sigma}(\Sigma, {\sf T}\Sigma) \to H^1_{s,\sigma}(\Sigma, {\sf T}\Sigma)$$
is bilinear and bounded. We can employ \cite[Theorem 1.2 or 2.1]{PSW18}  for problem \eqref{Abstract-Navier-Stokes} 
to obtain a solution $u(\cdot, u_4)$ with initial value 
$u_4=u(t_4,u_3)\in B_{sr}^{2-2/r}(\Sigma,{\sf T}\Sigma).$ 
To do so, we choose $\mu=1/2$ and verify that 
\begin{equation*}
\begin{aligned}
X_{\gamma,\mu}:&=(X_{1/2}, X_{1+1/2})_{1/2-1/r,r}=B^{2-2/r}_{sr,\sigma}(\Sigma, {\sf T}\Sigma), \\
X_{\gamma,1}:&=(X_{1/2}, X_{1+1/2})_{1-1/r,r}=B^{3-2/r}_{sr,\sigma}(\Sigma, {\sf T}\Sigma).
\end{aligned}
\end{equation*}
Noting that the numbers $\mu=1/2$ and $\beta=1/2$ satisfy the assumptions of \cite{PSW18} Theorem 1.2 or 2.1, we can once more
repeat the procedure outlined in step (i) to obtain
\begin{equation}
\label{eq:conv4}
|u(t_5,u_4)-u_*|_{B_{sr}^{3-2/r}(\Sigma)}\le C|u_4-u_*|_{B_{sr}^{2-2/r}(\Sigma)},
\end{equation}
for some $t_5=t_5(u_*)>0$,  provided $t_1$ is chosen sufficiently large.
Combining the estimates~\eqref{eq:conv1}-\eqref{eq:conv4} and 
using the semiflow property, we obtain the estimate
$$|u(t_5+t_4+ t_3+t_2+t_1,u_0)-u_*|_{B_{sr}^{3-2/r}(\Sigma)}\le C|u(t_1,u_0)-u_*|_{L_2(\Sigma)}.$$
Since $|u(t_1,u_0)-u_*|_{L_2(\Sigma)}\to 0$ at an exponential rate as $t_1\to\infty$, we conclude that
$u(t,u_0)$ converges to $u_*$ at an exponential rate in the topology of $B^{3-2/r}_{sr,\sigma}(\Sigma, {\sf T} \Sigma)$ 
as well, as $t\to \infty$.

\medskip\noindent
 {\bf (iv)}
Finally, given $q\in (1,\infty)$, the embedding 
$B_{sr,\sigma}^{3-2/r}(\Sigma,\mathsf{T}\Sigma)\hookrightarrow H_{q,\sigma}^{2}(\Sigma,\mathsf{T}\Sigma),$
being valid for sufficiently large parameters $s$ and $r$, yields the last assertion of the theorem.
\end{proof}
\begin{remarks}
\label{rem:convergence}
\vspace{1mm}
(a)
Let $d\ge 2$ and let ${\mathfrak F}^s(\Sigma)$ denote any of the spaces 
$$H^s_{q,\sigma}(\Sigma, {\sf T}\Sigma),\ B^s_{qp,\sigma}(\Sigma,{\sf T}\Sigma),
\quad\text{where $s\in\R,\ 1<p,q<\infty$.}
$$
Then we have
\begin{equation}
\label{Fs-decomposition}
{\mathfrak F}^s(\Sigma)
= \cE \oplus \{v\in {\mathfrak F}^s(\Sigma): \la v, z\ra_\Sigma =0, \ z\in\cE\}.
\end{equation}
Here, $\la v, z \ra_\Sigma: = (v|z)_\Sigma$ if $s>0$. In case $s\le 0$, 
$$
\la \cdot, \cdot\ra_\Sigma : {\mathfrak F}^{s} (\Sigma) \times ({\mathfrak F}^\prime)^{-s}(\Sigma) \rightarrow \R
$$
denotes the duality pairing, induced by $(\cdot | \cdot)_\Sigma$, where 
$$({\mathfrak F}^\prime)^s(\Sigma)\in \{H^s_{q',\sigma}(\Sigma,{\sf T}\Sigma), B^s_{q'p',\sigma}(\Sigma,{\sf T}\Sigma)\}.$$

\medskip\noindent
Since we can identify  $\cE\subset C^\infty(\Sigma, {\sf T}\Sigma)$  as a subspace of ${\mathfrak F}^s(\Sigma)$,
the expression $\la v, z \ra_\Sigma$ is defined for every $(v,z)\in {\mathfrak F}^s(\Sigma)\times \cE$ and
\eqref{Fs-decomposition} is, therefore, meaningful.

\begin{proof}
We will provide a proof of \eqref{Fs-decomposition}.
As $\cE $ is finite dimensional, we can find a basis $\{z_1,\ldots, z_m\}$ for $\cE$ which has the property
that $(z_i | z_j)_\Sigma =\delta_{ij}$.
With this at hand, we define the projection
\begin{equation}
\label{projection}
P^s_\cE : {\mathfrak F}^s(\Sigma) \to \cE, \quad P^s_\cE v:= \sum_{j=1}^m \la v , z_j \ra z_j,
\end{equation}
onto $\cE$.
This yields the direct topological decomposition  
${\mathfrak F}^s(\Sigma)= \cE \oplus V^s(\Sigma)$,
where 
$ V^s(\Sigma)= (I-P^s_\cE) {\mathfrak F}^s(\Sigma).$
In order to justify \eqref{Fs-decomposition}, it suffices to show that
\begin{equation*}
 V^s(\Sigma) =\hat V^s(\Sigma):=\{v\in {\mathfrak F}^s(\Sigma) \mid \la v, z\ra_\Sigma =0, \ z\in\cE\}.
\end{equation*}
Suppose  $v\in  V^s(\Sigma)$. 
Then $v=(I-P^s_\cE)v$ and we obtain 
\begin{equation*}
\la v, z_j \ra_\Sigma = \la (I-P^s_\cE)v, z_j \ra_\Sigma= \la v, z_j \ra_\Sigma  -\la P^s_\cE v, z_j \ra_\Sigma =0,\quad j=1,\ldots, m,
\end{equation*}
showing that $v\in \hat V^s(\Sigma).$
Suppose now that $v \in \hat V^s(\Sigma)$.
Since ${\mathfrak F}^s(\Sigma)= \cE \oplus V^s(\Sigma)$, 
there are unique elements $(z,w)\in \cE\times V^s(\Sigma)$ such that $v=z+w$.
Then, by the first step,
\begin{equation*}
0=\la v, z \ra_\Sigma = \la z+w, z \ra_\Sigma =\la z,z \ra_\Sigma = |z|^2_{L_2(\Sigma)},
\end{equation*}
hence $z=0$ and therefore $v=w\in V^s(\Sigma)$.
\end{proof}
\noindent
(b) 
It is interesting to note that 
the assertions of Propositions~\ref{pro:solutions-orthogonal} and~\ref{pro:global-existence-v}  
remain valid in case $d>2$, with the following modifications:

\smallskip\noindent
Let $p>2$ and $q\ge d > 2$. Suppose  $v_0\in B^{d/q-1}_{qp,\sigma}(\Sigma, {\sf T}\Sigma) $ satisfies  $\la v_0, z\ra _\Sigma=0$ for every $z\in \mathcal E$,
where $\la v_0, z\ra_\Sigma$ has the same meaning as in (a).
Let $v$ be the unique solution of \eqref{eq:weakSurfaceNS} with initial value $v_0$.

\smallskip
Then there exists a constant $\alpha>0$ such that
\begin{equation}
\label{estimate-d3}
|v(t)|_{L_2(\Sigma)}\le e^{-\alpha (t-\tau)} |v(\tau)|_{L_2(\Sigma)},\quad t\in [\tau ,t^+(v_0)),
\end{equation}
for  fixed $\tau\in [0,t^+(v_0))$, where  $\tau\in (0 ,t^+(v_0))$ in case $v_0\notin L_{2,\sigma}(\Sigma, {\sf T}\Sigma)$.
\clearpage
\begin{proof}
According to Theorem \ref{thm:weakSrurfaceStokes}, there exists a unique solution $v$ to problem \eqref{eq:weakSurfaceNS} with regularity
\begin{equation}
\label{regularity-combined}
\begin{aligned}
v&\in H_{p,\mu_c}^1((0,a); H^{-1}_{q,\sigma}(\Sigma,\mathsf{T}\Sigma))\cap L_{p,\mu_c}((0,a); H^1_{q,\sigma}(\Sigma,\mathsf{T}\Sigma)) \\
&\cap H^1_{p,{\rm loc}}((0,t^+); L_{q,\sigma}(\Sigma, {\sf T}\Sigma))\cap L_{p,{\rm loc}}((0, t^+); H^2_{q,\sigma}(\Sigma, {\sf T}\Sigma))
\end{aligned}
\end{equation}
for each fixed $a\in (0, t^+)$. For $z\in \cE$ we obtain, as in the proof of Theorem~\ref{pro:solutions-orthogonal},
\begin{equation*}
\frac{d}{dt} \la v(t) , z\ra_\Sigma
=- (\nabla_{v(t)}v(t)| z)_\Sigma - (A_{S,\Sigma} v(t) | z )_\Sigma
=  -(\nabla_{v(t)}v(t) | z)_\Sigma =0,
\end{equation*}
for all $t\in (0 ,t^+(v_0))$.
Therefore, $\la v(\cdot), z\ra_\Sigma \in H^1_{p,\mu_c}(0, a)$ and $\frac{d}{dt} \la v(t) | z\ra_\Sigma=0$ for $t\in (0,t^+(v_0))$.
By \cite[Lemma 2.1(b)]{PrSi04}, or \cite[Lemma 3.2.5(b)]{PrSi16}, we infer that $\la v(\cdot), z\ra_\Sigma \in H^1_{1,{\rm loc}}([0, a])$ for any fixed $a\in (0, t^+(v_0))$.
We can now conclude from the fundamental theorem that 
\begin{equation}
\label{senkrecht}
\la v(t), z\ra_\Sigma =0,\quad t\in [0, t^+(v_0)).
\end{equation}

\smallskip

Let $\tau\in (0,t^+(v_0))$ be fixed.
As $p,q>2$, we conclude from~\eqref{regularity-combined} and \eqref{senkrecht} that
$v(t)\in V^1_2(\Sigma)=\{u\in H^1_2(\Sigma, {\sf T}\Sigma)\mid (u |z)_\Sigma =0,\ z\in \cE\}$
for any $t\in [\tau,t^+(v_0))$. 
Hence, Korn's inequality~\eqref{Korn} holds true for $v(t)$ with $t\in [\tau, t^+(v_0))$,
and we can now follow the proof of Propositions~\ref{pro:solutions-orthogonal}(b) and \ref{pro:global-existence-v}
to obtain the assertion in~\eqref{estimate-d3}.
\end{proof}
\noindent
(c) Suppose $d>2$ and $q\ge d$. Then  \eqref{estimate-d3} holds true with $\tau=0$ for initial values 
$v_0\in B^{2\mu-2/p}_{qp,\sigma}(\Sigma, {\sf T}\Sigma)$ satisfying the assumptions of  \cite[Theorem 3.5(b)]{PSW20} and 
$(v_0 | z)_\Sigma =0$ for all $z\in\cE$.

\medskip\noindent
(d)
Suppose $p>2$ and $q\ge d>2$. 
Then every {\bf global} solution of \eqref{eq:weakSurfaceNS}, respectively \eqref{Abstract-Navier-Stokes},
converges exponentially fast to an equilibrium, namely to $P_\cE u_0$ (where $P_\cE$ is the projection defined in~\eqref{projection}), 
provided  the initial value $u_0$ satisfies the assumptions of Theorem~\ref{thm:weakSrurfaceStokes}  or  \cite[Theorem 3.5(b)]{PSW20}.
\begin{proof}
This follows by similar arguments as in the proof of Remark~\ref{pro:v-linearized} and Theorem~\ref{thm:convergence}.
\end{proof}
\medskip\noindent
(e)
According to Theorem 4.3 in \cite{PSW20}, any solution with initial value $u_0$ sufficiently close an equilibrium exists globally 
and, hence,  converges to $P_\cE u_0$ at an exponential rate.
\end{remarks}

\bigskip
\appendix
\section{}

\subsection{Auxilliary results}

We first consider a weak elliptic problem on compact manifolds without boundary.

\noindent
{
\begin{lemma}\label{lem:auxellprb1}
Let $1<q<\infty$. For each $v\in L_q(\Sigma,\mathsf{T}\Sigma)$ there exists a unique solution $\nabla_\Sigma\psi\in L_q(\Sigma,\mathsf{T}\Sigma)$ of
$$(\nabla_\Sigma\psi|\nabla_\Sigma\phi)_\Sigma=(v|\nabla_\Sigma\phi)_\Sigma,\quad \phi\in\dot{H}_{q'}^1(\Sigma).$$
\end{lemma}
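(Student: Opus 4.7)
The plan is to read the identity $(\nabla_\Sigma \psi | \nabla_\Sigma \phi)_\Sigma = (v | \nabla_\Sigma \phi)_\Sigma$ as the weak formulation of the scalar Poisson problem $-\Delta_\Sigma \psi = -{\rm div}_\Sigma v$ on the closed manifold $\Sigma$. Since only $\nabla_\Sigma \psi$ enters, the natural solution space is the homogeneous Sobolev space $\dot H^1_q(\Sigma)$, in which $\psi$ is determined up to an additive constant so that its gradient in $L_q(\Sigma, \mathsf{T}\Sigma)$ is uniquely specified. The right-hand side defines a continuous linear functional $F_v(\phi) := (v | \nabla_\Sigma \phi)_\Sigma$ on $\dot H^1_{q'}(\Sigma)$, with $|F_v(\phi)| \le |v|_{L_q(\Sigma)} |\nabla_\Sigma \phi|_{L_{q'}(\Sigma)}$, so that $F_v \in \big(\dot H^1_{q'}(\Sigma)\big)' =: \dot H^{-1}_q(\Sigma)$.

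First I would dispose of the Hilbert case $q=2$ by Lax--Milgram: the bilinear form $a(\psi,\phi) := (\nabla_\Sigma \psi | \nabla_\Sigma \phi)_\Sigma$ is precisely the inner product on $\dot H^1_2(\Sigma)$, so continuity and coercivity are trivial, and $F_v$ is continuous by Cauchy--Schwarz; this delivers a unique $\nabla_\Sigma \psi \in L_2(\Sigma,\mathsf{T}\Sigma)$. For general $q \in (1,\infty)$ the strategy is to invoke the well-known $L_q$-theory for the scalar Laplace--Beltrami operator on a smooth compact closed Riemannian manifold: with domain $H^2_q(\Sigma)$, the operator $-\Delta_\Sigma$ is non-negative, sectorial in $L_q(\Sigma)$, has compact resolvent, kernel equal to the constants, and admits a bounded $H^\infty$-calculus (see e.g.\ \cite{PrSi16,Ama13}). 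A standard duality and spectral-gap argument promotes this to the statement that $-\Delta_\Sigma : \dot H^1_q(\Sigma) \to \dot H^{-1}_q(\Sigma)$ is a topological isomorphism; applying it to $F_v$ yields the desired $\psi$, with uniqueness of $\nabla_\Sigma \psi$ inherited from injectivity.

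The main obstacle is justifying the $L_q$-isomorphism property of $-\Delta_\Sigma$ for $q \ne 2$. If a self-contained argument is preferred over citation, the cleanest route is to mimic the localization used in Section 3.2: cover $\Sigma$ by the finite chart system $\{(U_k,\varphi_k,\psi_k)\}$, transfer the weak equation to $\R^d$, where, after freezing coefficients and shrinking the charts, the principal part becomes a small perturbation $-{\rm div}(G_{(k)}\nabla\,\cdot\,)$ of $-\Delta$ on $\R^d$. Classical Calder\'on--Zygmund estimates for the Newtonian potential give $L_q$-boundedness of $\nabla(-\Delta)^{-1}{\rm div}$ on $\R^d$, and a Neumann-series perturbation argument (valid once the charts are fine enough) produces local solutions with the correct $L_q$-gradient estimates. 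These are then patched together via partition of unity; the lower-order coupling terms produce a compact perturbation, so a Fredholm alternative combined with the $q=2$ uniqueness from the previous paragraph yields both existence and uniqueness of $\nabla_\Sigma \psi \in L_q(\Sigma,\mathsf{T}\Sigma)$. This is the scalar analogue of Lemma \ref{lem:auxellop1}, on which the body of the paper already relies.
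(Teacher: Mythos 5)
Your proposal is correct in substance, and its first route is in fact the same argument the paper carries out, just stated at citation level: the ``standard duality and spectral-gap argument'' you appeal to is precisely what the paper makes rigorous by applying Amann's interpolation--extrapolation scale to $A_0=\lambda-\Delta_\Sigma$ in $L_q(\Sigma)$, identifying $B:=A_{-1/2}-\lambda\colon H^1_q(\Sigma)\to \big(H^1_{q'}(\Sigma)\big)'$ with the weak Dirichlet form, and then proving that $\mu=0$ is a \emph{semi-simple} eigenvalue of $B$ with $N(B)$ the constants, so that $X_{-1/2}=N(B)\oplus R(B)$, $R(B)=\{f:\langle f,1\rangle=0\}$, and $B$ is invertible from $X_{1/2}\cap R(B)$ onto $R(B)$; the functional $\langle f,\phi\rangle=(v|\nabla_\Sigma\phi)_\Sigma$ annihilates constants and hence lies in $R(B)$. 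Your homogeneous-space framing ($\dot H^1_q$ paired with $\dot H^{-1}_q:=(\dot H^1_{q'})'$) elegantly absorbs this compatibility condition into the duality, but the isomorphism claim still requires exactly the kernel/semi-simplicity analysis, since $0\in\sigma(-\Delta_\Sigma)$; it does not drop out of sectoriality or the $H^\infty$-calculus alone. Your second, localization-plus-Fredholm route is a genuinely different and viable alternative, in the spirit of the paper's Section 3 and Lemma \ref{lem:auxellop1}, at the price of redoing the Calder\'on--Zygmund/Neumann-series and patching work. One point to tighten in either route: for $q<2$, uniqueness is \emph{not} directly inherited from the $q=2$ case (an $L_q$ solution need not be in $L_2$); you must first show that any weak solution of the homogeneous problem is smooth (or at least lies in $\dot H^1_r$ for all $r$), hence constant --- this is exactly the role of the paper's observation that the spectrum of $B$ is $q$-independent and its eigenfunctions are regular, or equivalently a Weyl-type regularity step in your Fredholm argument.
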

\begin{proof}
For $\lambda>0$, let $A_0:=\lambda-\Delta_\Sigma$ in $X_0=L_{q}(\Sigma)$ with domain $X_1=H_q^2(\Sigma)$. By \cite[Theorem V.1.5.1]{Ama95}, the pair $(X_0,A_0)$ generates an interpolation-extrapolation scale $(X_\alpha,A_\alpha)$, $\alpha\in\R$, with respect to the complex interpolation functor $[\cdot,\cdot]_\theta$, $\theta\in (0,1)$. Let $X_0^\sharp=(X_0)'=L_{q'}(\Sigma)$ and denote by $A_0^\sharp$ the dual operator of $A_0$ in $X_0^\sharp$ with domain $X_1^\sharp:=H_{q'}^2(\Sigma)$. We write $(X_\alpha^\sharp,A_\alpha^\sharp)$, $\alpha\in\R$, for the dual interpolation-extrapolation scale generated by $(X_0^\sharp,A_0^\sharp)$. Then, $A_{-1/2}:X_{1/2}\to X_{-1/2}$ is a linear isomorphism,
where
$$X_{1/2}=[X_0,X_1]_{1/2}=H_q^1(\Sigma)$$
and
$$X_{-1/2}=\left(X_{1/2}^\sharp\right)'=\left([X_0^\sharp,X_1^\sharp]_{1/2}\right)'=\left(H_{q'}^1(\Sigma)\right)'=:H_q^{-1}(\Sigma).$$
We claim
$$\la A_{-1/2} u,\phi\ra=\int_\Sigma (\nabla_\Sigma u | \nabla_\Sigma\phi)\, d\Sigma+\lambda(u|\phi)_\Sigma,$$
for all $(u,\phi)\in X_{1/2}\times X_{1/2}^\sharp$. Indeed, for $u\in X_1$, it holds that $A_{-1/2}u=A_0 u$, hence
$$\la A_{-1/2} u,\phi\ra=(A_0u|\phi)_\Sigma=-(\Delta_\Sigma u|\phi)_\Sigma+\lambda(u|\phi)_\Sigma.$$
The surface divergence theorem as well as the density of $X_1$ in $X_{1/2}$ yield the claim.

Define an operator $B:X_{1/2}\to X_{-1/2}$ by $Bu=A_{-1/2}u-\lambda u$. Since the embedding $X_{1/2}\hookrightarrow X_{-1/2}$ is compact, the spectrum $\sigma(B)$ consists solely of eigenvalues with finite multiplicity. Furthermore, $\sigma(B)$ is invariant with respect to $q$ and for each eigenfunction $u$ of $B$ it holds that $u\in H_r^1(\Sigma)$ for \emph{any} $r\in (1,\infty)$.

We show that $\mu=0$ is a semi-simple eigenvalue of $B$. The equation $Bu=0$ in $X_{-1/2}$ is equivalent to
$$0=\int_\Sigma (\nabla_\Sigma u | \nabla_\Sigma\phi)\, d\Sigma$$
for all $\phi\in X_{1/2}^\sharp=H_{q'}^1(\Sigma)$. Choosing $\phi=u$, we obtain $\nabla_\Sigma u=0$, hence $u$ is constant. This shows
$$N(B)=\{u\in H_q^1(\Sigma)\mid u\ \text{is constant}\}.$$
We show $N(B^2)=N(B)$. For that purpose, let $u\in N(B^2)$ and define $v:=Bu$. Then $v\in N(B)$, hence $v$ is constant. For $\lambda>0$ we have
$$A_{-1/2}u=\lambda u+Bu=\lambda u+v\in L_q(\Sigma).$$
Solve $A_0w=\lambda u+v$ to obtain a unique solution $w\in H_q^2(\Sigma)$. Since $A_0 w=A_{-1/2} w$, this yields $A_{-1/2} u=A_{-1/2}w$ and therefore $u=w$ by injectivity of $A_{-1/2}$. This proves $u\in H_q^2(\Sigma)$ which in turn yields $A_0 u=\lambda u+v$ or equivalently $-\Delta_\Sigma u=v$. Integrating the last equation over $\Sigma$, yields $v=0$ as $v$ is constant. This shows $u\in N(B)$, hence $N(B^2)\subset N(B)$. Since the converse inclusion is obvious, we obtain the assertion.

We have shown so far that $\mu=0$ is a semi-simple eigenvalue of $B$. In particular, this implies
$$X_{-1/2}=N(B)\oplus R(B).$$
Consequently, the restricted operator $B:X_{1/2}\cap R(B)\to R(B)$ is invertible. Note that
$$R(B)=\{f\in X_{-1/2}\mid \la f,1\ra=0\}.$$
For $v\in L_q(\mathsf{T}\Sigma)$, define $f\in X_{-1/2}$ by $\la f,\phi\ra=(v|\nabla_\Sigma\phi)_\Sigma$. Then obviously $f\in R(B)$, hence there exists a unique solution $u\in X_{1/2}\cap R(B)$ of the equation $Bu=f$. The proof is completed.
\end{proof}
Next we study existence and uniqueness as well as regularity properties of solutions to some second order differential equations on $\R^d$.

For that purpose, we set
$$\la F_f,\phi\ra:=\int_{\R^d}f\phi\, dx,\quad \phi\in C_c^\infty(\R^d),$$
for  functions $f\in L_q(\R^d)$.
For $\lambda>0$, $k\in\{-1,0,1\}$ we then define
the function spaces
\begin{equation*}
\begin{aligned}
\mathbb{F}_\lambda^k:&=(\mathbb{F}^k,|\cdot|_{\mathbb{F}_\lambda^k}),\quad \mathbb{F}^k:=
\begin{cases}
\dot{H}_q^{-1}(\R^d),\ &k=-1,\\
\{f\in H_q^k(\R^d)\mid F_f\in \dot{H}_q^{-1}(\R^d)\},\ &k\in\{0,1\},
\end{cases} \\
\mathbb{E}_\lambda^k:&=(\mathbb{E}^k,|\cdot|_{\mathbb{E}_\lambda^k}),\quad \mathbb{E}^k:=\bigcap_{j=1}^{k+2}\dot{H}_q^j(\R^d),
\end{aligned}
\end{equation*}
equipped with the parameter-dependent norms
\begin{equation*}
\begin{aligned}
|f|_{\mathbb{F}_\lambda^{-1}} : &=|f|_{\dot{H}_q^{-1}(\R^d)}, && \\
|f|_{\mathbb{F}_\lambda^k}: &=
\sum_{j=0}^k\lambda^{(k-j)/2}|\nabla^jf|_{L_q(\R^d)}+\lambda^{(k+1)/2}|F_f|_{\dot{H}_q^{-1}(\R^d)},
&& k\in\{0,1\} \\
|u|_{\mathbb{E}_\lambda^k}:&=\sum_{j=1}^{k+2}\lambda^{(k+2-j)/2}|\nabla^j u|_{L_q(\R^d)}, &&k\in\{-1,0,1\}. &&
\end{aligned}
\end{equation*}
We are ready to prove the following result.}
\begin{lemma}\label{lem:auxellop1}
Let $1<q<\infty$, $k\in\{-1,0,1\}$ and $G\in W_\infty^{1+k}(\R^d)^{d\times d}$. Then there exist $\eta,\lambda_0>0$ such that
$$\operatorname{div}(G\nabla\cdot):\mathbb{E}_\lambda^k\to \mathbb{F}_\lambda^k$$
is an isomorphism, provided $|G-I|_{L_\infty(\R^d)}\le\eta$ and $\lambda\ge\lambda_0$.

Moreover, in this case there exists a constant $c(\lambda_0)>0$ such that the unique solution of
${\rm div}\,(G\nabla u)=f$ satisfies the estimate
\begin{equation}
\label{norm-estimate}
|\nabla u|_{H_q^{k+1}(\R^d)}\le c(\lambda_0) |f|_{\mathbb F^k_\lambda},\quad k\in\{-1,0,1\},\quad \lambda\ge \lambda_0,
\end{equation}
for any $f\in \mathbb F^k_\lambda$.
\end{lemma}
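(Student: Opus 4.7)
The plan is to reduce to the Laplacian $\Delta$ by a Neumann-series perturbation argument: first handle the case $G = I$, then show that the difference $\operatorname{div}(G\nabla\cdot) - \Delta$ is a small perturbation in the appropriate operator norm.

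\emph{Step 1 (model operator $\Delta$).} I verify that $\Delta:\mathbb{E}^k_\lambda \to \mathbb{F}^k_\lambda$ is a topological isomorphism with the norm of its inverse bounded independently of $\lambda\ge 1$. For $k=-1$, this is the classical identification $\Delta:\dot H^1_q(\R^d)\to \dot H^{-1}_q(\R^d)$ via duality, and neither space carries a $\lambda$-dependence. For $k\in\{0,1\}$, I represent $u=\Delta^{-1}f$ via Fourier multipliers: Calder\'on-Zygmund theory bounds $|\nabla^{k+2}u|_{L_q(\R^d)}$ (and $|\nabla^{k+1}u|_{L_q(\R^d)}$ when $k=1$) by $|\nabla^k f|_{L_q(\R^d)}$, while the lower-order derivatives of $u$ are controlled by $|F_f|_{\dot H^{-1}_q(\R^d)}$ after writing $f = \operatorname{div} g$ with $|g|_{L_q}\le C|F_f|_{\dot H^{-1}_q}$ and using that $\partial_i\partial_j\Delta^{-1}\operatorname{div}$ is a bounded Fourier multiplier on $L_q(\R^d)$. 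The $\lambda$-weights are chosen so that $|u|_{\mathbb{E}^k_\lambda}\le C|f|_{\mathbb{F}^k_\lambda}$ with $C$ independent of $\lambda$.

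\emph{Step 2 (perturbation estimate).} Writing $\operatorname{div}(G\nabla u) = \Delta u + R_G u$ with
\[
R_G u = (G_{ij}-\delta_{ij})\partial_i\partial_j u + (\partial_i G_{ij})\partial_j u,
\]
I show that the operator norm $|R_G|_{\mathbb{E}^k_\lambda\to \mathbb{F}^k_\lambda}$ is bounded by $C_0\eta + C_1(|G|_{W^{1+k}_\infty})\lambda^{-1/2}$. The leading-order term $(G-I)\nabla^2 u$ gives the $\eta$-contribution via $|G-I|_\infty\le\eta$; the terms involving derivatives of $G$ fall on a $u$-factor of lower order, which in $|u|_{\mathbb{E}^k_\lambda}$ carries a positive power of $\lambda^{-1/2}$ against the leading term. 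For the $\dot H^{-1}_q$ component of $R_G u$, one integration by parts gives $\langle F_{R_G u},\phi\rangle = -\int_{\R^d}(G-I)\nabla u\cdot \nabla\phi\,dx$, producing the factor $\eta$ again and matching the $\lambda^{(k+1)/2}$ weight. For $k=1$, the $|\nabla f|_{L_q}$-component of the target norm produces, upon differentiating $R_Gu$, a term $(\nabla^2 G)\cdot \nabla u$; this is where the assumption $G\in W^{1+k}_\infty=W^2_\infty$ enters, and it is estimated by $|\nabla^2 G|_\infty \cdot \lambda^{-1}\cdot \lambda|\nabla u|_{L_q}\le C\lambda^{-1}|u|_{\mathbb{E}^1_\lambda}$.

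\emph{Step 3 (Neumann series and estimate \eqref{norm-estimate}).} Picking $\eta$ small and $\lambda_0$ large so that $|R_G|_{\mathbb{E}^k_\lambda \to \mathbb{F}^k_\lambda}$ falls strictly below the reciprocal of the $\Delta^{-1}$-bound from Step~1 for every $\lambda\ge\lambda_0$, the operator $I+\Delta^{-1}R_G$ is invertible by Neumann series, so $\operatorname{div}(G\nabla\cdot) = \Delta(I+\Delta^{-1}R_G)$ is a topological isomorphism $\mathbb{E}^k_\lambda\to \mathbb{F}^k_\lambda$, with $|u|_{\mathbb{E}^k_\lambda}\le c(\lambda_0)|f|_{\mathbb{F}^k_\lambda}$. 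Inequality \eqref{norm-estimate} follows, since for $\lambda\ge\lambda_0>0$ one has $|\nabla u|_{H^{k+1}_q(\R^d)} = \sum_{j=1}^{k+2}|\nabla^j u|_{L_q(\R^d)}\le c(\lambda_0)|u|_{\mathbb{E}^k_\lambda}$, with each summand bounded by $\lambda_0^{-(k+2-j)/2}$ times the corresponding term of $|u|_{\mathbb{E}^k_\lambda}$.

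The main obstacle is the delicate bookkeeping in Step~2: keeping track of which derivative of $G$ accompanies which derivative of $u$, matching each to the appropriate $\lambda$-weight in the target norm, and verifying that the lower-order terms that cannot be absorbed by $\eta$ alone can all be absorbed by $\lambda^{-1/2}$ (and $\lambda^{-1}$ when $k=1$) factors for $\lambda\ge\lambda_0$ large enough, is where nearly all the work lies.
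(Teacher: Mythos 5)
Your proposal is correct and follows essentially the same route as the paper: split $\operatorname{div}(G\nabla\cdot)=\Delta+\operatorname{div}((G-I)\nabla\cdot)$, show $\Delta:\mathbb{E}^k_\lambda\to\mathbb{F}^k_\lambda$ is an isomorphism with $\lambda$-independent bounds, estimate the perturbation by $C(\eta+\lambda^{-1/2})$ in the parameter-weighted norms, and conclude by a Neumann series, reading off \eqref{norm-estimate} from the definition of $|\cdot|_{\mathbb{E}^k_\lambda}$ for $\lambda\ge\lambda_0$. The only differences are cosmetic: you write out the $k=1$ bookkeeping that the paper omits, and the multiplier you invoke for the lower-order bound should be the double Riesz transform $\partial_i\Delta^{-1}\partial_j$ applied to $g$ (giving $|\nabla u|_{L_q}\le C|F_f|_{\dot H^{-1}_q}$) rather than $\partial_i\partial_j\Delta^{-1}\operatorname{div}$, a harmless slip.
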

\begin{proof}
(i)
We start with the case $k=-1$. For $u\in\mathbb{E}_\lambda^{-1}$ we write
$$\la \operatorname{div}\,(G\nabla u),\phi\ra=\la\Delta u,\phi\ra+\la\operatorname{div}((G-I)\nabla u),\phi\ra,$$
where
$$\la \operatorname{div} v,\phi\ra:=-\int_{\R^d}v\cdot \nabla \phi\, dx$$
for $(v,\phi)\in L_q(\R^d)^d\times \dot{H}_{q'}^1(\R^d)$.

It is known that the operator $\Delta:\mathbb{E}_\lambda^{-1}\to \mathbb{F}_\lambda^{-1}$
is an isomorphism,  see for instance \cite{Tri83}, Theorem 5.2.3.1(i) and the remarks in Section 5.2.5 concerning duality,
or \cite[Lemma 3.3]{SiSo92}.
Furthermore, the estimate
$$|\la\operatorname{div}((G-I)\nabla u),\phi\ra|\le \eta|u|_{\dot{H}_q^1(\R^d)}|\phi|_{\dot{H}_{q'}^1(\R^d)}$$
holds. A Neumann series argument yields that the operator ${\operatorname{div}(G\nabla\cdot):\mathbb{E}_\lambda^{-1}\to \mathbb{F}_\lambda^{-1}}$
is an isomorphism as well, provided $\eta\in (0,1)$.

\medskip\noindent
(ii)
Let $k=0$. Then we have
$$\operatorname{div}(G\nabla u)=\Delta u+\operatorname{div}((G-I)\nabla u).$$
Also in this case, $\Delta:\mathbb{E}_\lambda^0\to\mathbb{F}_\lambda^0$ is an isomorphism.
Furthermore, there exists a constant $C>0$ such that
$|\Delta^{-1}f|_{\mathbb{E}_\lambda^0}\le C|f|_{\mathbb{F}_\lambda^0}$
for all $f\in {\mathbb{F}_\lambda^0}$ and all $\lambda\in (0,\infty)$. To see this, note that
$$|\nabla(\Delta^{-1}F_f)|_{L_q(\R^d)}\le C|F_f|_{\dot{H}_q^{-1}(\R^d)}$$
and
$$|\nabla^2(\Delta^{-1}f)|_{L_q(\R^d)}\le C|f|_{L_q(\R^d)}.$$

The definition of the norms in $\mathbb{E}_\lambda^0$ and $\mathbb{F}_\lambda^0$ yields the claim.
We then estimate as follows
\begin{align*}
|\operatorname{div}((G-I)\nabla u)|_{L_q(\R^d)}&\le |G|_{W_\infty^1(\R^d)}|\nabla u|_{L_q(\R^d)}+|G-I|_{L_\infty(\R^d)}|\nabla^2 u|_{L_q(\R^d)}\\
&\le C(\lambda^{-1/2}+\eta)|u|_{\mathbb{E}_{\lambda}^0}.
\end{align*}
Furthermore, as in the case $k=-1$, we have
$$\lambda^{1/2}|\operatorname{div}((G-I)\nabla u)|_{\dot{H}_q^{-1}(\R^d)}\le\eta|u|_{\mathbb{E}_\lambda^0}.$$
Once again, a Neumann series argument shows that $\operatorname{div}(G\nabla\cdot):\mathbb{E}_\lambda^0\to\mathbb{F}_\lambda^0$ is an isomorphism, provided $\eta$ and $\lambda^{-1/2}$ are chosen sufficiently small.

\medskip\noindent
(iii)
The proof for the remaining case $k=1$ follows literally the same strategy and is therefore omitted.

\medskip\noindent
(iv) The assertion in \eqref{norm-estimate} follows from the steps above and the definition of the norm of $\mathbb E^k_\lambda$.
\end{proof}

\subsection{Korn's inequality}

We will establish an appropriate version of Korn's inequality for compact surfaces $\Sigma$ without boundary.
For this, we use the notation from Section \ref{subsec:EnergyEst}.

\begin{theorem}[Korn's inequality]
\label{thm:Korn}
There exists a constant $C>0$ such that
\begin{equation}
\label{Korn}
|v|_{H^1_2(\Sigma)} \le C |\mathcal D_\Sigma(v)|_{L_2(\Sigma)}\quad\text{for all}\ v\in V^1_2(\Sigma).
\end{equation}
\end{theorem}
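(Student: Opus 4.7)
The plan is to prove \eqref{Korn} in two steps: first establish the "weak Korn inequality"
\begin{equation*}
|v|_{H^1_2(\Sigma)}^2 \le C_1 |\mathcal D_\Sigma(v)|_{L_2(\Sigma)}^2 + C_2 |v|_{L_2(\Sigma)}^2, \quad v\in H^1_{2,\sigma}(\Sigma,{\sf T}\Sigma),
\end{equation*}
and then absorb the $L_2$-term on the right by a standard compactness/contradiction argument that exploits the orthogonality condition in the definition \eqref{V2} of $V^1_2(\Sigma)$.

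For the weak inequality, I would use the pointwise identity for tangential vector fields
\begin{equation*}
2|\mathcal D_\Sigma(v)|^2 = |\nabla v|^2 + g^{ij}g^{k\ell}(\nabla_i v_k)(\nabla_\ell v_j),
\end{equation*}
where $\nabla$ denotes the Levi-Civita connection. Integrating over $\Sigma$ and integrating by parts on the cross term (no boundary contributions since $\partial\Sigma=\emptyset$), the commutator of covariant derivatives produces a Ricci curvature term, yielding the Bochner-type identity
\begin{equation*}
\int_\Sigma g^{ij}g^{k\ell}(\nabla_i v_k)(\nabla_\ell v_j)\, d\Sigma = \int_\Sigma ({\rm div}_\Sigma v)^2\, d\Sigma - \int_\Sigma ({\sf Ric}_\Sigma v | v)\, d\Sigma.
\end{equation*}
For $v\in H^1_{2,\sigma}(\Sigma,{\sf T}\Sigma)$ the divergence term vanishes, and compactness of $\Sigma$ gives $|{\sf Ric}_\Sigma|_{\infty} < \infty$, so
$|\nabla v|_{L_2(\Sigma)}^2 \le 2|\mathcal D_\Sigma(v)|_{L_2(\Sigma)}^2 + C|v|_{L_2(\Sigma)}^2$, which is the desired weak inequality.

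Suppose now that \eqref{Korn} fails. Then there exists a sequence $(v_n)\subset V^1_2(\Sigma)$ with $|v_n|_{H^1_2(\Sigma)}=1$ and $|\mathcal D_\Sigma(v_n)|_{L_2(\Sigma)} \to 0$. By weak compactness and the Rellich embedding $H^1_2(\Sigma) \hookrightarrow\hookrightarrow L_2(\Sigma)$, a subsequence satisfies $v_n \rightharpoonup v$ in $H^1_2$ and $v_n \to v$ in $L_2$. Applying the weak Korn inequality to the difference $v_n - v_m$ shows that $(v_n)$ is Cauchy in $H^1_2(\Sigma)$, hence $v_n \to v$ strongly with $|v|_{H^1_2(\Sigma)}=1$ and $\mathcal D_\Sigma(v)=0$. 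The cited regularity result (\cite[Lemma 3]{Pri94}) then forces $v\in\mathcal E$. But $V^1_2(\Sigma)$ is closed in $H^1_2$, so $v\in V^1_2(\Sigma)$, which means $(v|z)_\Sigma=0$ for every $z\in\mathcal E$; in particular $(v|v)_\Sigma=0$, giving $v=0$, a contradiction.

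The main obstacle is really the Bochner-type computation in local coordinates with the correct sign convention for ${\sf Ric}_\Sigma$; once that identity is in hand, both the weak inequality and the subsequent compactness argument are routine. The compactness step relies crucially on the fact that $\mathcal E$ is finite dimensional (so that the orthogonality condition defining $V^1_2(\Sigma)$ is compatible with $L_2$-limits) and that every $H^1_2$ element annihilated by $\mathcal D_\Sigma$ is already a smooth Killing field.
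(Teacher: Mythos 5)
Your proposal is correct and follows essentially the same route as the paper: a weak Korn inequality obtained from a Bochner-type identity (the paper gets it by citing $2\mathcal{P}_\Sigma{\rm div}_\Sigma\mathcal D_\Sigma(u)=\Delta_\Sigma u+{\sf Ric}_\Sigma u$ and $(\Delta_\Sigma u|u)_\Sigma=-|\nabla u|^2_{L_2(\Sigma)}$, rather than integrating by parts in coordinates as you do, and extends from $H^2_{2,\sigma}$ to $H^1_{2,\sigma}$ by density --- a step you should make explicit since your integration by parts needs two derivatives), followed by the same Petree--Tartar compactness/contradiction argument using $\mathcal E\cap V^1_2(\Sigma)=\{0\}$.
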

\begin{proof}
Let $Lu:=2\cP_\Sigma{\rm div}_\Sigma \mathcal D_\Sigma(u) $ for $u\in H^2_{2,\sigma}(\Sigma,\mathsf{T}\Sigma)$.
We know from Proposition A.2 in \cite{PSW20} that
$$Lu=\Delta_\Sigma u + {\sf Ric}_\Sigma\,u,$$
where $\Delta_\Sigma$ is the Bochner (the connection) Laplacian and ${\sf Ric}_\Sigma\,u$ is the Ricci $(1,1)$-tensor, given in local coordinates by
$({\sf Ric}_\Sigma)^\ell_k= g^{i\ell}R_{ik}$, so that  $ {\sf Ric}_\Sigma\,u =  R^\ell_k u^k\tau_\ell.$
It is well-known that $(\Delta_\Sigma u|u)_\Sigma = - |\nabla u|^2_{L_2(\Sigma)}$ for $u\in H^2_2(\Sigma,{\sf T}\Sigma)$,
see for instance \cite[Lemma 3.5]{SaTu20}.
Let  $u\in H^2_{2,\sigma}(\Sigma,\mathsf{T}\Sigma)$. Then
\begin{equation*}
\begin{aligned}
2\int_\Sigma |\mathcal D_\Sigma(u)|^2\,d\Sigma &=-\int_\Sigma (Lu|u)\,d\Sigma
= -\int_\Sigma \Big((\Delta_\Sigma u | u)+({\sf Ric}_\Sigma\,u | u)\Big)\,d\Sigma \\
&  =\int_\Sigma |\nabla u|^2\,d\Sigma  -\int_\Sigma ({\sf Ric}_\Sigma\,u | u)\,d\Sigma
\ge |\nabla u|^2_{L_2(\Sigma)} - C_1 |u|^2_{L_2(\Sigma)}.
\end{aligned}
\end{equation*}
Here we used that on the compact surface $\Sigma$, the Ricci tensor $ {\sf Ric}_\Sigma$ can be bounded uniformly, yielding
$$ \int_\Sigma ({\sf Ric}_\Sigma\,u | u)\,d\Sigma \le C_1 |u |^2_{L_2(\Sigma)}$$
with an appropriate positive constant $C_1$.
By density of the space $H^2_{2,\sigma}(\Sigma,\mathsf{T}\Sigma)$ in  $H^1_{2,\sigma}(\Sigma,\mathsf{T}\Sigma)$
we obtain
\begin{equation}
\begin{aligned}
\label{Korn-1}
  |u|^2_{H^1_2(\Sigma)} \le C_2 \Big( |\mathcal D_\Sigma (u)|^2_{L_2(\Sigma)} + | u |^2_{L_2(\Sigma)}\Big),\quad u\in H^1_{2,\sigma}(\Sigma,\mathsf{T}\Sigma),
\end{aligned}
\end{equation}
for an appropriate constant $C_2$.
The assertion in \eqref{Korn} now follows from the Petree-Tartar Lemma. For the readers' convenience, we include the proof here.

Suppose  \eqref{Korn} does not hold.
Then there exists a sequence $(v_n)_{n\in\N}$ in $V^1_2(\Sigma)$, see \eqref{V2}, such that
$| v_n |_{H^1_2(\Sigma)}=1$ for $n\in\N$ and $|\mathcal D_\Sigma (v_n)|_{L_2(\Sigma)}\to 0\ \text{as}\ n\to \infty.$
Since $H^1_2(\Sigma,\mathsf{T}\Sigma)$ is compactly embedded in $L_2(\Sigma,\mathsf{T}\Sigma)$, there exists a subsequence, still denoted by $(v_n)_{n\in\N}$,
which converges to an element $v\in L_2(\Sigma,\mathsf{T}\Sigma)$. From \eqref{Korn-1} follows that $ (v_n)_{n\in\N}$ is a Cauchy sequence in $V^1_2(\Sigma)$.
Completeness of $V^1_2(\Sigma)$ shows  that $v\in V^1_2(\Sigma)$ and $v_n\to v$ in $V^1_2(\Sigma)$.
Consequently, $ \mathcal D_\Sigma (v_n)\to \mathcal D_\Sigma(v)$ as well as  $ \mathcal D_\Sigma (v_n) \to 0$ in $L_2(\Sigma, {\sf T}^1_1 \Sigma )$.
This implies $\mathcal D_\Sigma (v)=0$, and then $v\in \mathcal E\cap  V^1_2(\Sigma)=\{0\}$,
in contradiction to   the assumption $|v|_{H^1_2(\Sigma)}=1$.
\end{proof}
\begin{remark}
\label{remark:Korn}
 Korn's inequality for embedded surfaces has also been established in \cite[Lemma 4.1]{JOR18}.
The proof given here is considerably shorter.
\end{remark}


\end{document}